\newtheorem{theorem}{Theorem}[section]
\newtheorem{lemma}[theorem]{Lemma}
\newtheorem{cor}[theorem]{Corollary}
\newtheorem{proposition}[theorem]{Proposition}
\newtheorem{mydef}[theorem]{Definition}
\newtheorem{remark}[theorem]{Remark}
\newtheorem*{hyp}{\textit{Hypothesis}}
\def\E{\mathbb{E}}
\def\R{\mathbb{R}}
\def\P{\mathbb{P}}
\def\Q{\mathbb{Q}}
\def\B{\mathcal{B}}
\def\1{\mathbbm{1}}
\def\F{\mathcal{F}}
\title{A new McKean-Vlasov stochastic interpretation of the 
parabolic-parabolic Keller-Segel model: The one-dimensional case. }
\author{Denis Talay\footnote{TOSCA, Inria Sophia 
Antipolis-Méditerranée, 2004 Route des Lucioles, 06902 Valbonne, France 
(denis.talay@inria.fr, milica.tomasevic@inria.fr).} and Milica 
Toma\v{s}evi\'c \footnotemark[\value{footnote}] \footnote{Universit\'e 
C\^ote d'Azur, Campus Valrose, Batiment M, 28 Avenue de Valrose, 06108 
Nice CEDEX 2, France. }}
\date{}
\begin{document}
\maketitle
\noindent
\textbf{Abstract:} In this paper we analyze a stochastic interpretation of the one-dimensional parabolic-parabolic Keller-Segel system without cut-off.
It involves an original type of McKean-Vlasov interaction kernel. At the particle level,
each particle interacts with all the past of each other particle by means of a time integrated functional involving a singular kernel. At the mean-field level
studied here, the McKean-Vlasov limit process interacts
with all the past time marginals of its probability distribution in a similarly singular way. We prove that the 
parabolic-parabolic Keller-Segel system in the whole Euclidean space and the corresponding McKean-Vlasov stochastic differential equation
are well-posed for any values of the parameters of the model.\\
\textbf{Key words:} Chemotaxis model; Keller–Segel system; Singular McKean-Vlasov non-linear stochastic differential equation.\\
\textbf{Classification:} 60H30 60H10 60K35.


\section{Introduction}

The standard $d$-dimensional parabolic--parabolic Keller--Segel model 
for chemotaxis describes  the time evolution of the density $\rho_t$ of 
a cell population and of the concentration $c_t$ of a chemical 
attractant: 
\begin{equation}
\label{KSd}
\begin{cases}
&  \partial_t \rho(t,x)=\nabla \cdot (\frac{1}{2}\nabla \rho -  
\chi\rho \nabla 
c)(t,x), \quad t>0, \ x \in \R^d, \\
 & \alpha~\partial_t c(t,x) = \frac{1}{2}\triangle c(t,x)  -\lambda 
 c(t,x) + \rho(t,x),  \quad 
 t>0, \ x \in \R^d.\\
 & \rho(0,x)=\rho_0(x),~~~c(0,x)=c_0(x), 
\end{cases}
\end{equation}  
See e.g. Corrias~\cite{Corrias2014}, Perthame~\cite{perthame} and 
references therein for theoretical results on this system of PDEs and 
applications to Biology. 

Recently, stochastic interpretations have been proposed for a 
simplified version of the model, that is, 
the parabolic-elliptic model which corresponds to the value
$\alpha=0$. They all rely on the fact that, in the parabolic-elliptic 
case, the equations for $\rho_t$ and $c_t$ can be decoupled and $c_t$ 
can be explicited as the convolution of the initial condition~$c_0$ and 
the kernel $k(x)=-\frac{x}{2 \pi |x|^2}$. Consequently, the stochastic process of 
McKean--Vlasov type whose $\rho_t$ is the time marginal density 
involves the singular interaction kernel~$k$. This explains why, so 
far, only partial results are obtained and heavy techniques are used to 
get them. In ~Jabir et al.~\cite{JTT}, one may find a short review of the works by  Ha\v{s}kovec and Schmeiser~\cite{haskovec-schmeiser}, Fournier and 
Jourdain~\cite{fournier-jourdain} and Cattiaux and P\'ed\`eches~\cite{cattiaux-pedeches}.

Budhiraja and Fan \cite{Budhiraja} have studied a McKean--Vlasov SDE related to a parabolic--parabolic version of the model with cut-off
 and a forcing potential term. Under a suitable convexity assumption, they obtain 
uniform in time concentration 
inequalities for the corresponding particle system and uniform in time error estimates for a numerical approximation of the exact McKean--Vlasov process.

We here deal with the parabolic--parabolic system ($\alpha >0$) without cut-off and study the McKean-Vlasov stochastic representation of the mild formulation of the equation satisfied by~$\rho_t$.
 This representation involves a singular 
interaction kernel which is 
 different from the one in the above mentioned approaches and does not seem to have been studied in the McKean-Vlasov non-linear SDE literature. The 
system reads
\begin{equation}
\label{NLSDEd}
\begin{cases}
& dX_t= b^\sharp(t,X_t)dt +\Big\{ \int_0^t (K^\sharp_{t-s}\ast 
p_s)(X_t)ds\Big\} dt
+  dW_t, \quad t> 0, \\
& p_s(y)dy:=  \mathcal{L}( X_{s}),\quad X_0 \sim \rho_0(x)dx,
\end{cases}
\end{equation}
where $K^\sharp_t(x):=\chi e^{-\lambda t}\nabla( \frac{1}{(2\pi 
t)^{d/2}}e^{-\frac{|x|^2}{2t}})$ and $b^\sharp(t,x):=\chi e^{-\lambda t}  
\nabla \E c_0(x+W_t)$.
Here, $(W_t)_{t\geq 0}$ is a $d$-dimensional Brownian motion on a 
filtered probability space $(\Omega, \F, \P, (\F_t) )$ and  $X_0$ is an 
$\R^d$-valued $\F_0-$measurable random variable. 
Notice that the formulation requires that the one dimensional time 
marginals of the law of the solution are absolutely continuous with 
respect to Lebesgue's measure and that the process interacts 
with all the past time marginals of its probability distribution through a functional involving a singular kernel.

The analysis of the well-posedness of this non-linear equation and the 
proof that $p_s=\rho(s,\cdot)$ for any~$s$ are delicate, particularly in 
the multi-dimensional case when $\chi$ is large 
enough to induce solutions with blow-ups in finite time. This 
theoretical work is 
still in progress~\cite{Mi-De-2D}. As numerical simulations of 
the related particle system appear to be effective,
it seems interesting to validate our approach in the 
one-dimensional case.

The objective of this paper is to prove general 
existence and 
uniqueness results for both the deterministic system~(\ref{KSd}) and the 
stochastic dynamics~(\ref{NLSDEd}) in $d=1$. In our companion 
paper~\cite{JTT} we show the well-posedness and 
propagation of chaos property of the corresponding particle system  where each particle interacts with all the past of the other ones by 
means of a time integrated singular kernel.  

In this one-dimensional framework the PDE \eqref{KSd} was previously studied by~\cite{OsakiYagi,HillenPotapov} in bounded intervals~$I$ with 
periodic boundary conditions while we here deal with the problem posed
on the whole space~$\R$. In~\cite{OsakiYagi} one assumes $\rho_0 
\in L^2(I)\cap L^1(I)$ and $c_0 \in H^1(I)$. In~\cite{HillenPotapov} 
one assumes $\rho_0 \in 
L^\infty(I)\cap L^1(I)$ and $c_0 \in W^{\sigma,p}(I)$, where $p$ and 
$\sigma$ 
belong to a particular set of parameters. Here, we only suppose 
that $\rho_0$ is in $L^1(\R)$.

We emphasize that we do not limit ourselves to the specific
kernel~$K^\sharp_t(x)$ related to the Keller--Segel model. We below 
show that the mean--field PDE and stochastic differential equation of 
Keller-Segel type are well-posed for a whole class of time integrated 
singular kernels. The mean-field SDE cannot be analyzed by means of 
standard coupling methods or Wasserstein distance contractions. Both to 
construct local solutions and to go from local to global solutions, an 
important issue consists in properly defining the set of weak solutions 
without any assumption on the initial density $\rho_0$, which led us to 
introduce constraints on the time marginal densities.  To prove that 
these constraints are satisfied in the limit of an iterative procedure 
(where the kernel is not cut off), 
 the norms of the successive time marginal densities  cannot be 
allowed to exponentially depend on the $L^\infty$-norm of the 
successive corresponding drifts. They neither can be allowed to depend on H\"{o}lder-norms of the drifts. Therefore, we use an accurate estimate (with explicit constants)
on densities of one-dimensional diffusions with bounded measurable 
drifts which is obtained by a stochastic technique rather than the PDE techniques. This strategy 
allows us to get uniform bounds on the
sequence of drifts, which is essential to get existence and uniqueness of the local solution to the non-linear martingale problem solved by any limit of the Picard procedure, and to suitably paste local solutions when constructing the global solution.

The paper is organized as follows. In Section~2 we state our main 
results. In Section~3 we prove a preliminary estimate on the 
probability density of diffusions whose drift is only supposed Borel 
measurable and bounded. In Section~4 we study a non-linear 
McKean-Vlasov-Fokker-Planck equation. In Section~5 we prove the local 
existence and uniqueness of a solution to a non-linear stochastic
differential equation more general than~\eqref{NLSDEd} (for $d=1$). In 
Section~6 we get the global well-posedness of this equation. In 
Section~7 we apply the preceding result to the specific case of the 
one-dimensional parabolic--parabolic Keller-Segel model. The appendix 
section~8 concerns an 
explicit formula for the transition density of a particular diffusion.

\paragraph*{Notation.} In all the paper we denote by~$C_T,~ C_T(b_0, p_0)$, etc.,  any constant 
which depends on $T$ and the other specified parameters, but is uniform w.r.t. $t\in [0,T]$ and may change from line.
\section{Our main results}

Our first main result concerns the well-posedness of a non-linear 
one-dimensional stochastic differential equation (SDE) with a non 
standard 
McKean--Vlasov interaction kernel which at each time~$t$ involves in a 
singular way all the time marginals up to time~$t$ of the probability 
distribution of the solution. As our technique of analysis is not 
limited to the above kernel~$K^\sharp$, we consider the following 
McKean-Vlasov stochastic equation: 
\begin{equation}
\label{NLSDEsimple}
\begin{cases}
& dX_t= b(t,X_t)dt +\Big\{\int_0^t (K_{t-s}\ast p_s)(X_t)ds\Big\} dt 
+ dW_t, \quad t\leq  T, \\
& p_s(y)dy:=  \mathcal{L}( X_{s}),\quad X_0 \sim p_0,
\end{cases}
\end{equation}
and in all the sequel we assume the following conditions on the 
interaction kernel.

\begin{hyp}[H]
 The function $K$ defined on $\R^+ \times \R$ is such that for any 
 $T>0$:
\begin{enumerate}
\item For any $t>0$, $K_t$ is in $L^1(\R)$.
\item For any $t>0$ the function $K_t(x)$ is a bounded continuous 
function on 
$\R$.

\item The set of points $x\in\R$ such that $\lim_{t\rightarrow0}K_t(x) < \infty $ has full Lebesgue measure. 

\item For any $t>0$,  the function $f_1(t):=\int_0^t 
\frac{\|K_{t-s}\|_{L^1(\R)}}{\sqrt{s}} ds$ is well defined and 
bounded on~$[0,T]$.
\item For any $T>0$ there exists $C_T$ such that, for any 
probability density $\phi$ on $\R$,
$$ \sup_{(t,x)\in (0,T]\times 
\R}\int \phi(y) \|K_\cdot(x-y)\|_{L^1(0,t)}~dy\leq 
C_T.$$
\item Finally,
$$ \sup_{0\leq t\leq T} \int_0^T \| K_{T+t-s} 
\|_{L^1(\R)}~\frac{1}{\sqrt{s}}~ds \leq C_T. $$
\end{enumerate}
\end{hyp}

As emphasized in the introduction, the well-posedness of 
the system~\eqref{NLSDEsimple}
cannot be obtained by applying known results in the literature.

Given $(t,x)\in \R^+ \times \R$ and a family of densities $(p_t)_{t\leq T}$ we set
\begin{equation} \label{def:B}
B(t,x;p):= \int_0^t(K_{t-s}\ast p_s)(x) ds.
\end{equation}
We now define the notion of a weak solution to \eqref{NLSDEsimple}. 

\begin{mydef}
\label{def:weakSDE}
The family $(\Omega,\F,\P,(\F_t),X,W)$ is said to be a weak solution
to the equation \eqref{NLSDEsimple} up to time $T>0$ if:
\begin{enumerate}
\item $(\Omega,\F,\P,(\F_t))$ is a filtered probability space.
\item The process $X:=(X_t)_{t\in [0,T]}$ is real-valued,
continuous, and $(\F_t)$-adapted. In addition, the probability 
distribution of~$X_0$ has density~$p_0$.
\item The process $W:=(W_t)_ {t\in [0,T]}$ is a 
    one-dimensional $(\F_t)$-Brownian motion.
    \item The probability distribution $\P\circ X^{-1}$ has 
    time marginal densities $(p_t, \ t\in[0,T])$
   with respect to Lebesgue measure which satisfy
  \begin{equation}
\label{density_infty}
\forall 0<t\leq T,~~~\|p_t\|_{L^\infty(\R)}\leq \frac{C_T}{\sqrt{t}}.
\end{equation}
\item For all $t\in [0,T]$ and $x\in \R$, one has that $\int_0^t|b(s,x)|~ds<\infty.$
    \item $\P$-a.s. the pair $(X,W)$ satisfies \eqref{NLSDEsimple}.
\end{enumerate}
\end{mydef}
\begin{remark} \label{rk:cond-integr-densite}
For any $T>0$
Inequality~\eqref{density_infty} and Hypothesis (H-4) lead to
$$ \sup_{0\leq t\leq T}\sup_{x\in\R}|B(t,x,p)| \leq C_T. $$
\end{remark}

The following theorem provides existence and uniqueness of the weak 
solution to~\eqref{NLSDEsimple}.

\begin{theorem}
\label{mainTh}
Let $T>0$. Suppose that $p_0 \in L^1(\R)$ is a probability density 
function 
and $b\in L^\infty([0,T]\times \R)$ is continuous w.r.t. the space 
variable. Under 
the hypothesis $(\text{H})$, Eq.~\eqref{NLSDEsimple} admits a unique
weak solution in the sense of Definition \ref{def:weakSDE}. 
\end{theorem}
 We finally state an easy result which is useful to prove the propagation of chaos in the case of Keller-Segel kernel (see \cite{JTT}):\begin{cor}
\label{cor:forPS}
In addition to the assumptions of Theorem \ref{mainTh} suppose the following hypothesis: 
\begin{enumerate}[H-7.]
\item for any $t>0$, $K_t$ is in $L^2(\R)$ and the function $f_2(t):=\int_0^t \frac{\|K_{t-s}\|_{L^2(\R)}}{s^{1/4}} ds$ is well defined and bounded on~$[0,T]$.
\end{enumerate}
Then, there exists a unique weak solution to \eqref{NLSDEsimple} in the sense of the Definition \ref{def:weakSDE} modified as follows: Instead of \eqref{density_infty} one imposes
\begin{equation}
\label{density_space}
\forall 0<t\leq T,~~~\|p_t\|_{L^2(\R)}\leq \frac{C_T}{t^{1/4}}.
\end{equation}
\end{cor}

Our next result concerns the well-posedness of the 
the one-dimensional parabolic-parabolic Keller-Segel model
\begin{subequations}
\label{KS}
\begin{empheq}[left={}\empheqlbrace]{align}
  & \frac{\partial\rho}{\partial t}(t,x) =\frac{\partial}{\partial x} 
  \cdot (\frac{1}{2}\frac{\partial\rho}{\partial x} - \chi \rho
  \frac{\partial c}{\partial x})(t,x), \quad  t>0,~~x \in \R, 
  \label{KS1} 
  \\
  &  \frac{\partial c}{\partial t}(t,x) =  
  \frac{1}{2}\frac{\partial^2c}{\partial 
  x^2}(t,x) - \lambda c(t,x) + \rho(t,x), \quad t>0,~~x \in \R, 
  \label{KS2}\\
  &  \rho(0,x)= \rho_0(x),~~~c(0,x)= c_0(x). \nonumber
\end{empheq}
\end{subequations}
The parameters $\chi$ and $\lambda$ are strictly positive. As this 
system preserves the total mass, that is,
$$ \forall t>0,~~~\int_\Omega \rho(t,x)dx = \int_\Omega \rho_0(x)dx =: 
M, $$
the new functions $\tilde{\rho}(t,x) := \frac{\rho(t,x)}{M}$ and
$\tilde{c}(t,x) := \frac{c(t,x)}{M}$
satisfy the system~\eqref{KS} with the new 
parameter~$\tilde{\chi} := \chi M$. Therefore, w.l.o.g. we may and do
thereafter assume that $M=1$.

Denote by $g_t$ the density of $W_t$. We define the notion 
of solution for the system \eqref{KS}:
\begin{mydef}
\label{notionOfSol}
Given the functions $\rho_0$ and $c_0$, and the constants $\chi >0$, 
$\lambda\geq0$, $T>0$, the pair $(\rho,c)$ is said 
to be a 
solution to~\eqref{KS} if~$\rho(t,\cdot)$ is a probability density 
function for 
every $0\leq t\leq T$,
$c$ is in $L^\infty([0,T];C_b^1(\R))$,
one has
$\|\rho(t,\cdot)\|_{L^\infty(\R)}\leq \frac{C_T}{\sqrt{t}}$
for any $t\in(0,T]$,
and the following equality
\begin{equation} \label{eq:rho-KS}
\rho(t,x) =g_t\ast \rho_0(x) - \chi \int_0^t\frac{\partial g_{t-s}}
{\partial x}\ast(\frac{\partial c}{\partial x} (s,\cdot)
~\rho(s,\cdot))(x)~ds
\end{equation}
is satisfied in the sense of the distributions with
\begin{equation} \label{eq:c-KS}
c(t,x) = e^{-\lambda t}(g(t,\cdot \ )\ast c_0)(x)+ \int_0^t 
e^{-\lambda s} (g_s \ast \rho(t-s,\cdot))(x)~ds.
\end{equation}

\end{mydef}

Notice that the function $c(t,x)$ defined by~\eqref{eq:c-KS}
is a mild solution to~\eqref{KS2}.
These solutions are known as integral solutions and they have 
already been studied in PDE literature for the two-dimensional 
Keller-Segel model for which sub-critical and critical regimes exist
depending on the parameters of the model
(see~\cite{Corrias2014} and references therein). 
In the one-dimensional case there is no critical regime as shown by the 
following theorem.
\begin{cor}
\label{KSmain} 
Assume that $\rho_0 \in L^1(\R)$ and $ c_0 \in  C_b^1(\R)$. 
Given any $\chi>0$, $\lambda\geq0$ and $T>0$,
the time marginals~$\rho(t,x)\equiv p_t(x)$
of the probability 
distribution of the unique solution to Eq.~\eqref{NLSDEd} with $d=1$
and the corresponding function $c(t,x)$
provide a global solution to \eqref{KS} in the sense of 
Definition \ref{notionOfSol}. Any other solution $(\rho^1,c^1)$ with 
the same initial condition $(\rho_0,c_0)$ satisfies
$\|\rho^1(t,\cdot)-\rho(t,\cdot)\|_{L^1(\R)}=0$
and $\|\frac{\partial c^1}{\partial x}(t,\cdot)-\frac{\partial 
c}{\partial 
x}(t,\cdot)\|_{L^1(\R)}=0$ for every $0\leq t\leq T$.
\end{cor}

\begin{remark}
From estimates below we could deduce some additional regularity 
results which we do not need here: See 
Remark~\ref{rk:regularity-result}. In particular, if $\rho_0\in 
L^{\infty}(\R)$, then $\rho \in L^\infty([0,T];L^1\cap L^\infty(\R))$.
If $\rho_0\in L^{2}(\R)$, then $\rho \in L^\infty([0,T];L^1
\cap L^2(\R))$ 
and $  t^{1/4}\|\rho_t\|_{L^{\infty}(\R)}\leq C$.
 As explained in the introduction, we prefer to only suppose that $\rho_0 \in L^1(\R)$. 
\end{remark}


\section{Preliminary: A density estimate}
\label{sec:prel}
\label{genInftyNorm}
In the sequel, we will get local solutions to \eqref {NLSDEsimple} and extend them to global solutions by means of an iterative procedure. The $L^\infty$-norms of the successive drifts are needed to be bounded from above uniformly w.r.t. the iteration step. Standard density estimates obtained by using Girsanov theorem or PDE analysis do not help to this purpose. The reason is  that they involve constants which exponentially depend on the $L^\infty$-norm (or even H\"older-norm) of the drifts. We therefore proceed by using an accurate pointwise estimate (with explicit constants) on densities of one-dimensional diffusions with bounded measurable
drifts. Estimate~\eqref{QianZhengUS} below is obtained by using a stochastic technique. Its drawback is that the map $y\mapsto p_y^\beta(t,x,y)$ is not a probability density function. However, it suffices to nicely bound the successive drifts of the Picard iterations as shown by Proposition \ref{iteratesRes}.

Let $X^{(b)}$ be a process defined by
\begin{equation}
\label{ProcessTimeDrift}
X_t^{(b)} = X_0 + \int_0^t b(s,X_s^{(b)})~ds + W_t, \quad t\in [0,T].
\end{equation}
To obtain $L^\infty(\R)$ estimates for the transition probability 
density $p^{(b)}(t,x,y)$ of $X^{(b)}$ under 
the only assumption that the drift $b(t,x)$ is measurable and uniformly 
bounded we slightly extend the estimate proved 
in Qian and Zheng~\cite{QianZheng} for time homogeneous drift coefficients $b(x)$. 
We here propose a proof different from the original one. It avoids the use of densities of pinned 
diffusions and the claim that $p^{(b)}(t,x,y)$ is continuous w.r.t. 
all the variables which does not seem obvious to us.
In our proof we adapt the method in~\cite{Makhlouf}, 
the main difference being that 
instead of the Wiener measure our 
reference measure is the probability distribution of the particular 
diffusion process $X^\beta$ considered in~\cite{QianZheng} and defined 
by
$$ X^\beta_t = X_0 + \beta \int_0^t \text{sgn}(y-X^\beta_s)~ds + W_t. $$

\begin{theorem}
\label{QianZhengUSTh}
Let $X^{(b)}$ be the process defined in \eqref{ProcessTimeDrift} with $X_0=x$. Let $p^\beta_y(t,x,z)$ be the transition density 
of $X^\beta$.
Assume $\beta:=\sup_{t\in[0,T]} \|b(t,\cdot)\|_\infty <\infty$.  Then 
for all $ y \in \R$  and $t\in(0,T]$ it holds that
\begin{equation} \label{QianZhengUS}
p^{(b)}(t,x,y) \leq p^\beta_y(t,x,y)= \frac{1}{\sqrt{2\pi t}} 
\int_{\frac{|x-y|}{\sqrt{t}}}^{\infty}  z e^{-\frac{(z-\beta 
\sqrt{t})^2}{2}} dz.
\end{equation}
\end{theorem}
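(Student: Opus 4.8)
The goal is to bound the transition density of $X^{(b)}$ by the explicit density-type expression associated with the extremal diffusion $X^\beta$. The plan is to use a Girsanov change of measure, following Makhlouf's method but with reference measure $\P^\beta$ (the law of $X^\beta$ started at $x$) rather than the Wiener measure.

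First I would fix $x\in\R$ and set up the comparison on path space. Under the reference probability $\P^\beta$, the canonical process is $X^\beta_t = x + \beta\int_0^t \mathrm{sgn}(y-X^\beta_s)\,ds + W^\beta_t$ for a $\P^\beta$-Brownian motion $W^\beta$. Since $b$ is bounded and measurable, Girsanov's theorem gives that under $d\P^{(b)} = Z_T\, d\P^\beta$, with
$$ Z_T = \exp\Big( \int_0^T \big(b(s,X_s) - \beta\,\mathrm{sgn}(y-X_s)\big)\,dW^\beta_s - \tfrac12 \int_0^T \big(b(s,X_s)-\beta\,\mathrm{sgn}(y-X_s)\big)^2\,ds \Big), $$
the canonical process has the law of $X^{(b)}$ started at $x$; the Novikov condition holds because the integrand is bounded by $2\beta$. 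The key point, as in Makhlouf, is to rewrite the stochastic integral in $Z_T$ using Itô's formula so that it becomes a function of the endpoint $X_T$ plus a term with a favorable sign. Writing $h(z)=|y-z|$, one has (in the sense of the Itô–Tanaka formula, since $h$ is convex) $dh(X_s) = -\mathrm{sgn}(y-X_s)\,dW^\beta_s + \mathrm{sgn}(y-X_s)\cdot(\text{drift})\,ds + dL_s$, where $L$ is the local time at $y$, which is nonnegative. This lets me express $-\beta\int_0^T \mathrm{sgn}(y-X_s)\,dW^\beta_s = \beta\big(h(X_T)-h(X_0)\big) - \beta\int_0^T(\cdots)\,ds - \beta L_T$. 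The local time contributes $-\beta L_T \le 0$, which is exactly what makes the bound go in the right direction.

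Next I would evaluate the density. For a test function $\varphi\ge 0$,
$$ \E^{(b)}[\varphi(X_T)] = \E^\beta\big[ Z_T\, \varphi(X_T)\big] \le e^{C(\beta,T)}\, \E^\beta\big[ e^{\beta|y-X_T|}\, e^{-\beta|y-x|}\,\varphi(X_T)\big], $$
where $C(\beta,T)$ collects the bounded drift-squared terms and the $ds$-terms, and the crucial $-\beta L_T$ has been dropped using $L_T\ge 0$. The remaining expectation is against the explicit density $p^\beta_y(T,x,z)$ from Qian and Zheng (recalled in the appendix), so the density of $X^{(b)}$ satisfies
$$ p^{(b)}(T,x,z) \le e^{C(\beta,T)}\, e^{\beta|y-z|-\beta|y-x|}\, p^\beta_y(T,x,z). $$
Specializing the free parameter $y$ in $X^\beta$ to be the target point (so that the reference diffusion is pulled toward $z$), the exponential prefactor $e^{\beta|y-z|-\beta|y-x|}$ at $z=y$ becomes $e^{-\beta|y-x|}$, and one checks that the constants assemble precisely into the stated integral $\frac{1}{\sqrt{2\pi t}}\int_{|x-y|/\sqrt t}^\infty z\, e^{-(z-\beta\sqrt t)^2/2}\,dz$ after plugging in Qian--Zheng's formula for $p^\beta_y(t,x,y)$ and doing the Gaussian bookkeeping; this is the step where the appendix formula is used in an essential way.

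The main obstacle I anticipate is twofold. First, the rigorous justification of the Itô–Tanaka manipulation and of dropping the local time term: one must be careful that $X^\beta$ itself has a drift of the form $\beta\,\mathrm{sgn}(y-\cdot)$, so applying Tanaka's formula to $|y-X_s|$ produces a drift term that partially cancels and a $+\beta\,|{\cdot}|$ local-time term, and the signs must be tracked exactly. Second, the final bookkeeping: matching $e^{C(\beta,T)} e^{-\beta|x-y|} p^\beta_y(t,x,y)$ with the clean right-hand side of \eqref{QianZhengUS} requires inserting Qian and Zheng's explicit representation and simplifying — conceptually routine but the place where an error would most easily creep in. I would also need to confirm that this pathwise/Girsanov argument yields a genuine bound on the transition \emph{density} (not merely on $\E[\varphi(X_T)]$), which follows since the inequality holds for all nonnegative $\varphi$ and the right-hand side is an integral against an explicit density, but it is worth stating carefully to avoid the continuity-of-density issue the authors explicitly wish to sidestep.
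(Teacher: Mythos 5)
There is a genuine gap, and it sits exactly where your plan leans on the Girsanov density. Under $\P^\beta$ the exponent of $Z_t$ is $\int_0^t\bigl(b(s,X_s)-\beta\,\mathrm{sgn}(y-X_s)\bigr)\,dW^\beta_s-\tfrac12\int_0^t\bigl(b(s,X_s)-\beta\,\mathrm{sgn}(y-X_s)\bigr)^2ds$. The Itô--Tanaka step only converts the $\mathrm{sgn}$ part into $\beta\bigl(|y-X_t|-|y-x|\bigr)+\beta^2 t-\beta L_t^y$; the remaining term $\int_0^t b(s,X_s)\,dW^\beta_s$ is a genuine stochastic integral of a merely bounded measurable integrand, is unbounded on path space, and cannot be "collected into $C(\beta,T)$" nor rewritten as a function of the endpoint (no Tanaka-type identity is available for a general $b$). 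So the asserted pathwise bound $Z_t\le e^{C(\beta,t)}e^{\beta|y-X_t|-\beta|y-x|}$ is false, and repairing it with Cauchy--Schwarz or Hölder changes the Gaussian scaling and introduces constants. Moreover, even granting your intermediate bound $p^{(b)}(t,x,z)\le e^{C(\beta,t)}e^{\beta|y-z|-\beta|y-x|}p^\beta_y(t,x,z)$, at $z=y$ you are left with the prefactor $e^{C(\beta,t)}e^{-\beta|x-y|}$, which exceeds $1$ (take $x=y$), whereas the right-hand side of \eqref{QianZhengUS} is \emph{exactly} $p^\beta_y(t,x,y)$ by \eqref{eq:p-beta}: the theorem leaves no room for any multiplicative constant, so the final "Gaussian bookkeeping" cannot close. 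The sharpness of the bound is not a crude-domination phenomenon.

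The paper obtains the constant-free inequality from an exact identity plus a sign argument, not from bounding a change of measure (this is what "Makhlouf's method" refers to here; it is a PDE argument, not a Girsanov one). One solves the backward Cauchy problem \eqref{sysSgn} driven by the generator of $X^\beta$ (Veretennikov gives a $W^{1,2}_p$ solution), identifies $u(s,x)=\int f(z)p^\beta_y(t-s,x,z)\,dz$, and applies the Itô--Krylov formula to $u(s,X_s^{(b)})$, i.e. along the \emph{perturbed} process defined in \eqref{ProcessTimeDrift}. This yields the Duhamel-type representation $p^{(b)}(t,x,z)=p^\beta_y(t,x,z)+\int_0^t\E\bigl[(b(s,X_s^{(b)})-\beta\,\mathrm{sgn}(y-X_s^{(b)}))\,\partial_x p^\beta_y(t-s,X_s^{(b)},z)\bigr]ds$, and the key observation (from the explicit formula \eqref{eq:p-beta}) is that at $z=y$ the integrand satisfies $(b(s,x)-\beta\,\mathrm{sgn}(y-x))\,\partial_x p^\beta_y(t-s,x,y)\le 0$ pointwise, since $|b|\le\beta$ forces the first factor to have sign opposite to $\mathrm{sgn}(y-x)$ while $\partial_x p^\beta_y(\cdot,\cdot,y)$ has the sign of $\mathrm{sgn}(y-x)$. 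Choosing $z=y$ then gives $p^{(b)}(t,x,y)\le p^\beta_y(t,x,y)$ with no extra constant. If you want to salvage a measure-change proof you would essentially have to condition on the endpoint (pinned diffusions), which is precisely the route the authors designed this argument to avoid.
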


\begin{proof}
Let $f \in \mathcal{C}_K^\infty(\R)$ and fix $t\in (0,T]$. Consider the 
parabolic PDE driven by the infinitesimal generator of~$X^\beta$:
\begin{equation}
\begin{cases}
\label{sysSgn}
& \frac{\partial u}{\partial t}(s,x) + \frac{1}{2} \frac{\partial^2 
u}{\partial x ^2}(s,x) + \beta \text{sgn}(y-x) \frac{\partial 
u}{\partial x}(s,x) =0 , \quad 0\leq s< t,~~~x\in \R, \\
&u(t,x)= f(x),~~~x\in \R.\\
\end{cases}
\end{equation}
In view of Veretennikov \cite[Thm.1]{Veretennikov1982}
there exists a solution 
$u(s,x)\in W^{1,2}_p([0,t]\times R)$. 
Applying the It\^o-Krylov formula to $u(s,X^\beta_s)$ we obtain that
$$u(s,x)= \int f(z) p^\beta_y(t-s,x,z)~dz. $$
The formula~(\ref{eq:p-beta-x-y-z}) from our appendix 
 allows us to differentiate under the integral sign:
$$ \frac{\partial u}{\partial x}(s,x) 
= \int f(z) \frac{\partial p^\beta_y}{\partial x}(t-s,x,z)~dz,
~~~\forall 0\leq s< t\leq T. $$

Fix $0<\varepsilon<t$. Now apply the It\^o-Krylov formula to 
$u(s,X^{(b)}_s)$ for $0\leq s\leq t-\varepsilon$ and use the PDE~ 
\eqref{sysSgn}. It comes:
$$\E( u(t-\varepsilon, X^{(b)}_{t-\varepsilon}))= u(0,x) + \E 
\int_0^{t-\varepsilon}(b(s,X^{(b)}_s)-\beta 
\text{sgn}(y-X_s^{(b)}))\frac{\partial u}{\partial 
x}(s,X^{(b)}_s)~ds. 
$$

In view of~Corollary~\ref{cor:ineq-derivee-densite} in the appendix  
there exists a function $h \in L^1([0,t]\times \R)$ such that
\begin{equation} \label{derOfDensBound}
\forall 0<s<t\leq T,~\forall y,z \in \R,
~~\E \left|\frac{\partial p^\beta_y}{\partial 
x}(t-s,X_s^{(b)},z)
\right|
\leq C_{T,\beta,x,y} h(s,z).
\end{equation}
Consequently,
\begin{equation*}
\begin{split}
\E( u(t-\varepsilon, X^{(b)}_{t-\varepsilon}))
&= \int f(z) p^\beta_y(t,x,z)~dz  \\
&\qquad + \int f(z) \int_0^{t-\varepsilon} \E \left\{
(b(s,X_s^{(b)})-\beta \text{sgn}(y-X_s^{(b)}))\frac{\partial 
p^\beta_y}{\partial x}(t-s,X_s^{(b)},z)\right\}~ds~dz.
\end{split}
\end{equation*}
Let now $\epsilon$ tend to~0. By Lebesgue's dominated convergence 
theorem we obtain
\begin{equation*}
\begin{split}
\int f(z) p^{(b)}(t,x,z)dz &= \int f(z) p^\beta_y(t,x,z)dz  \\
&\qquad + \int f(z) \int_0^{t} \E \left\{ (b(s,X_s^{(b)})-\beta 
\text{sgn}(y-X_s^{(b)}))\frac{\partial p^\beta_y}{\partial 
x}(t-s,X_s^{(b)},z) \right\} ~ds~dz.
\end{split}
\end{equation*}
Therefore the density $p^{(b)}$ satisfies:
$$ p^{(b)}(t,x,z)= p^\beta_y(t,x,z) + \int_0^t \E 
\left\{ (b(s,X_s^{(b)})-\beta 
\text{sgn}(y-X_s^{(b)}))\frac{\partial p^\beta_y}{\partial 
x}(t-s,X_s^{(b)},z) \right\}~ds. $$
As noticed in~\cite{QianZheng}, in view of Formula~(\ref{eq:p-beta}) 
from our appendix we have for any $x \in \R$
$$ (b(s,x)-\beta \text{sgn}(y-x))
\frac{\partial}{\partial x}p^\beta_y(t-s,x,y) \leq 0.$$
This leads us to choose $z=y$ in the preceding equality, which gives us
$$p^{(b)}(t,x,y)= p^\beta_y(t,x,y) + \int_0^t \E \left\{
(b(s,X_s^{(b)})-\beta \text{sgn}(y-X_s^{(b)}))
\frac{\partial p^\beta_y}{\partial 
x}(t-s,X_s^{(b)},y)\right\}~ds, $$
from which
$$ \forall t\leq T,~~p^{(b)}(t,x,y)\leq p^\beta_y(t,x,y). $$
We finally use Qian and Zheng's explicit representation (see~\cite{QianZheng} and our appendix section~\ref{sec:appendix}).

\end{proof}

\begin{cor}
\label{prel:densityEST}
Assume $X_0$ is distributed according to the probability density 
function $p_0$ on $\R$. Denote by $p(t,\cdot)$ the probability density of 
$X_t^{(b)}$.
One has 
\begin{equation}
\label{densityLp}
\|p(t,\cdot)\|_{L^\infty(\R)} \leq  
\frac{1}{\sqrt{2\pi t}}+\beta.
\end{equation}
\end{cor}

\begin{proof}
In view of \eqref{QianZhengUS} we have
\begin{equation*}
\begin{split}
p(t,y) &\leq \frac{1}{\sqrt{2\pi t}}\int p_0(x) 
\int_{\frac{|x-y|}{\sqrt{t}}}^{\infty}  z e^{-\frac{(z-\beta 
\sqrt{t})^2}{2}} dzdx \\
&\leq \frac{1}{\sqrt{2\pi t}} \int p_0(x) \int_{\frac{|x-y|}{\sqrt{t}} 
-\beta \sqrt{t}}^{\infty}  (z+\beta \sqrt{t}) e^{-\frac{z^2}{2}} dzdx\\
    &=\frac{1}{\sqrt{2\pi t}} (\int p_0(x)e^{-\frac{(|x-y|-\beta 
    t)^2}{2t}}dx + \beta \sqrt{t} \int p_0(x) 
    \int_{\frac{|x-y|}{\sqrt{t}} -\beta 
    \sqrt{t}}^{\infty}e^{-\frac{z^2}{2}} dz dx )\\
    &\leq \frac{1}{\sqrt{2\pi t}} \int p_0(x) e^{-\frac{(|y-x|-\beta 
    t)^2}{2t}}dx + \beta.
\end{split}
\end{equation*}
\end{proof}

\begin{remark} \label{rk:regularity-result}
If $p_0 \in L^{\infty}(\R)$, the above calculation shows that
$$ \|p(t,\cdot)\|_{L^\infty(\R)}\leq  2\|p_0\|_{L^\infty(\R)}+\beta. $$
If  $p_0 \in L^{p}(\R), p>1$, H\"older's inequality leads to
$$\frac{1}{\sqrt{2\pi t}} \int p_0(x) e^{-\frac{(|y-x|-\beta 
t)^2}{2t}}dx\leq\frac{\|p_0\|_{L^{p}(\R)}}{\sqrt{2\pi t}} (\int  
e^{-q\frac{(|y-x|-\beta t)^2}{2t}}dx)^{1/q}\leq \frac{C_q 
t^{\frac{1}{2q}}}{\sqrt{t}}= \frac{C_q}{t^{\frac{1}{2p}}}.$$
\end{remark}

\section{A non-linear McKean--Vlasov--Fokker--Planck equation}
\label{subs:uniq}
\begin{proposition}
\label{MeqUniq}
Let $T>0$. Assume $p_0 \in L^1(\R)$, $b\in L^\infty([0,T]\times \R)$ 
and Hypothesis $(\text{H})$. Let $(\Omega,\F,\P,(\F_t),X,W)$ be a weak 
solution to 
\eqref{NLSDEsimple} until $T$.
Then,
\begin{enumerate}
\item The marginals  $(p_t)_{t\in [0,T]}$ satisfy in the sense 
of the distributions the mild equation
 \begin{equation}
 \label{mildEQ}
 \forall t\in(0,T], \quad p_t= g_t\ast p_0-\int_0^t\frac{\partial g_{t-s}}{\partial x} \ast(p_s(b(s,\cdot)+ B(s,\cdot \ ;p))ds.
 \end{equation}
 \item Equation \eqref{mildEQ} admits at most one solution $(p_t)_{ 
 t\in [0,T]}$  which for any $t\in[0,T]$ belongs to $L^1(\R)$ and 
 satisfies \eqref{density_infty}.
\end{enumerate}
\end{proposition}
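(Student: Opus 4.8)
The plan is to establish claim (1) by a duality/Itô argument and claim (2) by a time-weighted $L^2$ estimate combined with a short-time bootstrap.

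\textbf{For the mild equation (claim 1).} I would fix $t\in(0,T]$ and $\psi\in C_K^\infty(\R)$ and use the time-dependent test function $\phi(s,x):=(g(t-s,\cdot)\ast\psi)(x)$, $s\in[0,t]$. Since $g$ solves the heat equation, $\phi\in C^{1,2}([0,t]\times\R)$ with bounded derivatives, $\partial_s\phi+\tfrac12\partial_{xx}\phi=0$, $\phi(t,\cdot)=\psi$, and $\phi(0,\cdot)=g(t,\cdot)\ast\psi$. Applying Itô's formula to $s\mapsto\phi(s,X_s)$ along the weak solution (the mere measurability of the drift $b(s,X_s)+B(s,X_s;p)$ is harmless because $\phi$ is smooth), using \eqref{NLSDEsimple} and the backward heat identity to kill the $\partial_s\phi$ and $\tfrac12\partial_{xx}\phi$ terms, gives
\[
\phi(t,X_t)=\phi(0,X_0)+\int_0^t\big(b(s,X_s)+B(s,X_s;p)\big)\,\partial_x\phi(s,X_s)\,ds+\int_0^t\partial_x\phi(s,X_s)\,dW_s .
\]
Taking expectations the martingale term drops ($\partial_x\phi$ is bounded), and every integral is finite since $\|b\|_\infty<\infty$ and, by Cauchy--Schwarz with \eqref{density_space} and item~(4) of $(\text{H})$, $\|B(s,\cdot;p)\|_\infty\le\int_0^s\|K_{s-\tau}\|_{L^2(\R)}\|p_\tau\|_{L^2(\R)}\,d\tau\le C_T$. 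Rewriting each expectation through the marginals $p_s$, using that $g(t,\cdot)$ is even, and moving $\psi$ outside by Fubini (legitimate by the same bounds and $\|p_s\|_{L^1(\R)}=1$) yields exactly \eqref{mildEQ} tested against $\psi$; the kernel $\frac{\partial}{\partial y}g(t-s,\cdot)$ there is $(\partial_x g(t-s,\cdot))$ with the convolution transferred onto $p_s(b+B(\cdot;p))$, which accounts for the sign/orientation convention. As $\psi$ is arbitrary, this proves claim (1); the only delicate points are the integrability bounds justifying the interchanges.

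\textbf{For uniqueness (claim 2).} Let $p,q$ be two solutions of \eqref{mildEQ} in the stated class and set $w_t:=p_t-q_t$. Since $p\mapsto B(\cdot,\cdot;p)$ is linear, $B(s,\cdot;p)-B(s,\cdot;q)=B(s,\cdot;w)$ with $B(s,\cdot;w):=\int_0^s K_{s-\tau}\ast w_\tau\,d\tau$, so subtracting the two mild equations (the initial data cancel) gives the closed integral equation
\[
w_t=\int_0^t\frac{\partial}{\partial y}g(t-s,\cdot)\ast\big(w_s\,a_s+q_s\,B(s,\cdot;w)\big)\,ds,\qquad a_s:=b(s,\cdot)+B(s,\cdot;p),\ \ \|a_s\|_\infty\le\|b\|_\infty+C_T=:A_T .
\]
I would first estimate on a short interval. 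Using $\|\partial_x g(\tau,\cdot)\|_{L^1(\R)}\le c\,\tau^{-1/2}$, Young's inequality, Cauchy--Schwarz for $B(s,\cdot;w)$, and $\|q_s\|_{L^2(\R)}\le C_T s^{-1/4}$ from \eqref{density_space},
\[
\|w_t\|_{L^2}\le c\int_0^t(t-s)^{-1/2}\Big(A_T\|w_s\|_{L^2}+C_T s^{-1/4}\!\int_0^s\|K_{s-\tau}\|_{L^2}\|w_\tau\|_{L^2}\,d\tau\Big)ds .
\]
Put $\varphi(t):=\sup_{s\le t}s^{1/4}\|w_s\|_{L^2(\R)}$, which is finite and nondecreasing by \eqref{density_space}. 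Then item~(4) of $(\text{H})$ bounds the inner integral by $\varphi(s)\int_0^s\|K_{s-\tau}\|_{L^2}\tau^{-1/4}d\tau\le C_T\varphi(s)$, and since $\int_0^t(t-s)^{-1/2}s^{-1/4}ds=c\,t^{1/4}$ (a Beta-function integral) and $\varphi$ is nondecreasing,
\[
t^{1/4}\|w_t\|_{L^2}\le C_T\,t^{1/4}\!\int_0^t(t-s)^{-1/2}s^{-1/4}\varphi(s)\,ds\le C_T'\,t^{1/2}\,\varphi(t).
\]
Taking the supremum over $t\le t_0$ gives $\varphi(t_0)\le C_T' t_0^{1/2}\varphi(t_0)$, hence $w\equiv0$ on $[0,t_0]$ as soon as $C_T' t_0^{1/2}<1$, a condition on $t_0$ depending only on $T$, $\|b\|_\infty$ and the constants of $(\text{H})$. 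If $t_0<T$ I would bootstrap: once $w\equiv0$ on $[0,t_0]$, for $s\ge t_0$ one has $B(s,\cdot;w)=\int_{t_0}^s K_{s-\tau}\ast w_\tau\,d\tau$, so $\|B(s,\cdot;w)\|_\infty\le\big(\int_0^T\|K_r\|_{L^2}dr\big)\sup_{[t_0,s]}\|w_\cdot\|_{L^2}$ with the integral finite by item~(1) of $(\text{H})$, while the weight $s^{-1/4}\le t_0^{-1/4}$ is now bounded; the same computation yields $\sup_{[t_0,t]}\|w_\cdot\|_{L^2}\le C\sqrt{t-t_0}\,\sup_{[t_0,t]}\|w_\cdot\|_{L^2}$, hence $w\equiv0$ on $[t_0,t_0+\delta]$ with $\delta$ again depending only on the fixed data. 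Finitely many such steps cover $[0,T]$, so $w\equiv0$ and $p_t=q_t$ for all $t\in(0,T]$.

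\textbf{Expected main obstacle.} The crux is the time-nonlocal memory drift $B(s,\cdot;w)=\int_0^s K_{s-\tau}\ast w_\tau\,d\tau$, which prevents closing the $w$-equation by a naive Gronwall inequality. Two ingredients are essential: working in the weighted norm $\sup_s s^{1/4}\|w_s\|_{L^2}$ so that the two time singularities — $(t-s)^{-1/2}$ from $\|\partial_x g(t-s)\|_{L^1}$ and $s^{-1/4}$ from the density bound \eqref{density_space} — combine through the Beta integral into a net positive power $t^{1/4}$ (item~(4) of $(\text{H})$ being exactly what lets one absorb the $K$-convolution in the memory term with the same weight); and, since the a priori bounds carry no smallness, only concluding $w\equiv0$ on a short interval and then bootstrapping, which is legitimate precisely because on each later interval the memory term no longer sees the already-vanishing part of $w$.
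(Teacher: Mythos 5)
Your proof of part 1 is essentially the paper's: the paper also tests against $G_{t,f}(s,y)=\int f(x)\,g(t-s,y-x)\,dx$, applies It\^o's formula along the weak solution, and justifies the interchanges by the boundedness of $b$ and of $B(\cdot,\cdot;p)$, obtained exactly as you do via Cauchy--Schwarz, \eqref{density_space} and (H.4). For part 2, however, you take a genuinely different route. The paper estimates $\|p^1_t-p^2_t\|_{L^1(\R)}$, splitting the drift difference into three terms; the key term is controlled through $\|B(s,\cdot;p^1)-B(s,\cdot;p^2)\|_{L^1(\R)}\le\int_0^s\|K_{s-\tau}\|_{L^1(\R)}\|p^1_\tau-p^2_\tau\|_{L^1(\R)}\,d\tau$ combined with the $L^\infty$ bound \eqref{density_infty} on $p^1_s$ (which produces the $1/\sqrt{s}$ factor) and the $f_1$ part of (H.4); the resulting inequality $u(t)\le C_T\int_0^t u(s)(t-s)^{-1/2}ds+C_T\int_0^t u(s)s^{-1/2}ds$ is then closed by the dedicated singular Gronwall lemma (Lemma \ref{ModifiedSgGronwall}), whose own proof contains a restart argument. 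You instead close the estimate in the weighted norm $\sup_{s\le t}s^{1/4}\|w_s\|_{L^2(\R)}$, using only \eqref{density_space}, the $f_2$ part of (H.4) and (H.1), and you replace the Gronwall lemma by an explicit smallness-plus-bootstrap argument; your computations (Young's inequality, the Beta integral giving the $t^{1/2}$ gain, the reduction of the memory term to $\int_{t_0}^s$ after $w$ vanishes on $[0,t_0]$, and the uniformity of the step size $\delta$) are correct. What each buys: your version never uses \eqref{density_infty}, so it actually yields uniqueness in the a priori larger class defined by \eqref{density_space} alone — which is the class appearing in the weak-solution definition and the martingale problem — at the price of carrying the restart by hand; the paper's $L^1$ argument works in the natural norm of densities and, with the singular Gronwall lemma factored out as a lemma, is reused verbatim for the uniqueness part of the Keller--Segel system in Corollary \ref{theo:stoch-interpr-KS}.
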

\begin{proof}
We successively prove~\eqref{mildEQ} and the uniqueness of its solution 
in $L^1(\R)$.
\paragraph*{1.} 

Now, for $f\in C^2_b(\R)$ consider the Cauchy problem
 \begin{equation}
 \label{pdeNOdrift}
 \begin{cases}
 & \frac{\partial G}{\partial s} + \frac{1}{2} \frac{\partial^2 
 G}{\partial x^2}  = 0, \quad 0\leq s< t,~~~x\in \R,\\
 & \lim_{s\to t^-}G(s,x)= f(x). 
 \end{cases}
 \end{equation}
 The function
 $$G_{t,f}(s,x)=\int f(y) g_{t-s}(x-y)dy$$
 is a smooth solution to \eqref{pdeNOdrift}. 
Applying It\^o's formula we get
\begin{equation*}
 \begin{split}
  G_{t,f}(t,X_t)- G_{t,f}(0,X_0) &= \int_0^t
     \frac{\partial G_{t,f}}{\partial s}(s,X_s)ds + \int_0^t 
     \frac{\partial G_{t,f}}{\partial x}(s,X_s) (b(s,X_s)+B(s,X_s;p))ds 
     \\
     &~~~+ \int_0^t \frac{\partial G_{t,f}}{\partial x} (s,X_s) dWs + 
     \frac{1}{2}\int_0^t \frac{\partial^2 G_{t,f}}{\partial x^2} 
     (s,X_s)ds.
 \end{split}
 \end{equation*}
 Using \eqref{pdeNOdrift} we obtain
 \begin{equation}
 \label{derivingME}
 \E f(X_t)= \E G_{t,f}(0,X_0) + \int_0^t \E\left[\frac{\partial 
 G_{t,f}}{\partial 
 x}(s,X_s) (b(s,X_s)+B(s,X_s;p))\right]ds=: I+II.
 \end{equation}
 On the one hand one has
 $$I= \int \int f(y) g_t(y-x)dy~p_0(x)dx=\int f(y) (g_t \ast p_0)(y) dy.$$
 On the second hand one has
 \begin{align*}
     & II= \int_0^t \int \frac{\partial}{\partial x} \big [ \int f(y) g_{t-s}(x-y)dy \big ] (b(s,x)+B(s,x;p))p_s(x) dxds\\
     & =\int_0^t \int \int f(y) \frac{\partial g_{t-s}}{\partial x} 
     (x-y)dy (b(s,x)+B(s,x;p))p_s(x) dxds\\
     &=- \int f(y) \int_0^t [\frac{\partial g_{t-s}}{\partial x} 
     \ast( (b(s,\cdot)+B(s,\cdot;p))p_s)](y)ds dy.
 \end{align*}
 Thus \eqref{derivingME} can be written as
$$\int f(y)p_t(y)dx = \int f(y) (g_t\ast p_0)(y) dy+ \int f(y) 
\int_0^t[ \frac{\partial g_{t-s}}{\partial x} \ast( 
(b(s,\cdot)+B(s,\cdot;p))p_s)](y)ds dy, $$
 which is the mild equation \eqref{mildEQ}.\\\\
 
\paragraph*{2.} Assume $p_t^1$ and $p_t^2$ are two mild solutions
in the sense of the distributions to \eqref{mildEQ} which 
satisfy
 $$\exists C>0, \forall t\in (0,T], ~~~\|p_t^1\|_{L^\infty(\R)} + 
 \|p_t^2\|_{L^\infty(\R)}\leq \frac{C_T}{\sqrt{t}}. $$
 
%
Then, 
 \begin{equation*}
 \begin{split}
      \|p_t^1- p_t^2\|_{L^1(\R)} &\leq \int_0^t \|\frac{\partial 
      g_{t-s}}{\partial x}\ast[ 
      B(s,\cdot \ ;p^1)p^1_s-B(s,\cdot \ ;p^2)p^2_s)\|_{L^1(\R)}ds \\
      &\quad + \int_0^t \|\frac{\partial g_{t-s}}{\partial x} 
      \ast[b(s,\cdot \ )(p^1_s-p^2_s)]\|_{L^1(\R)}  ds\\
      &\leq \int_0^t \|\frac{\partial g_{t-s}}{\partial x} 
      \ast[(B(s,\cdot;p^1)-B(s,\cdot;p^2))p^1_s]\|_{L^1(\R)}ds \\
      &\quad +\int_0^t\|\frac{\partial g_{t-s}}{\partial x} 
      \ast[(p^1_s-p^2_s)B(s,\cdot \ ;p^2)]\|_{L^1(\R)}ds\\ 
      &\quad +\int_0^t \|\frac{\partial g_{t-s}}{\partial x} 
      \ast[b(s,\cdot \ )(p^1_s-p^2_s)]\|_{L^1(\R)} ds \\
      &=:I+II+III.
  \end{split}
  \end{equation*}
   As
  $$\|\frac{\partial g_{t-s}}{\partial x}\|_{L^1(\R)} \leq 
  \frac{C_T}{\sqrt{t-s}},$$ 
  the convolution inequality $\|f\ast h\|_{L^1(\R)}\leq 
  \|f\|_{L^1(\R)}\|h\|_{L^1(\R)}$ and 
  Remark~\ref{rk:cond-integr-densite} lead to
  \begin{align*}
      &II\leq \int_0^t \|\frac{\partial g_{t-s}}{\partial x}\|_{L^1(\R)} \|(p^1_s-p^2_s)B(s,\cdot;p^2)\|_{L^1(\R)}ds\leq C_T 
      \int_0^t \frac{\|p^1_s-p^2_s\|_{L^1(\R)}}{\sqrt{t-s}}ds.\\
  \end{align*}
  As $b$ is bounded, we also have
  $$|III|\leq C_T \int_0^t 
  \frac{\|p^1_s-p^2_s\|_{L^1(\R)}}{\sqrt{t-s}}ds.$$
  We now turn to $I$. Notice that
$$\|B(s,\cdot;p^1)-B(s,\cdot;p^2)\|_{L^1(\R)} \leq\int_0^s \|K_{s-\tau}\|_{L^1(\R)} 
\|p_\tau^1-p_\tau^2\|_{L^1(\R)}d\tau, $$
from which, since by hypothesis $(p_t)$ satisfies \eqref{density_infty},
\begin{equation*}
\begin{split}
I &\leq \int_0^t \frac{C_T}{\sqrt{t-s} \sqrt{s}}\int_0^s 
  \|K_{s-\tau}\|_{L^1(\R)} \|p_\tau^1-p_\tau^2\|_{L^1(\R)}d\tau ds \\
&= \int_0^t\|p_\tau^1-p_\tau^2\|_{L^1(\R)} \int_\tau^t 
  \frac{C_T}{\sqrt{t-s} \sqrt{s}}\|K_{s-\tau}\|_{L^1(\R)} ds d\tau.
\end{split}
\end{equation*}  
  In addition, using Hypothesis (H-4),
  $$\int_\tau^t \frac{1}{\sqrt{t-s} \sqrt{s}}\|K_{s-\tau}\|_{L^1(\R)} 
  ds \leq \frac{1}{\sqrt{\tau}}\int_\tau^t \frac{1}{\sqrt{t-s} 
  }\|K_{s-\tau}\|_{L^1(\R)} ds=  \frac{1}{\sqrt{\tau}}\int_0^{t-\tau} 
  \frac{\|K_{s}\|_{L^1(\R)}}{\sqrt{t-\tau-s} }ds  \leq 
  \frac{C_T}{\sqrt{\tau}}.$$
  It comes:
  $$I\leq C_T \int_0^t \frac{\|p_\tau^1-p_\tau^2\|_{L^1(\R)} 
  }{\sqrt{\tau}}d\tau.$$
 Gathering the preceding estimates we obtain
  $$\|p_t^1- p_t^2\|_{L^1(\R)} \leq C_T \int_0^t 
  \frac{\|p_s^1-p_s^2\|_{L^1(\R)}}{\sqrt{t-s}}ds +  C_T \int_0^t 
  \frac{\|p_s^1-p_s^2\|_{L^1(\R)} }{\sqrt{s}}ds .$$
Applying a  Singular Gronwall Lemma (see Lemma~\ref{ModifiedSgGronwall} 
below), we conclude 
  $$\forall t\in (0,T], \quad \|p_t^1-p_t^2\|_{L^1(\R)}=0,$$
  which ends the proof.
\end{proof}

In the above proof we have used the following result:
\begin{lemma}
\label{ModifiedSgGronwall}
Let $(u(t))_{t\geq 0}$ be a non-negative bounded function such that for 
a given $T>0$, there exists a positive constant $C_T$ such that for any 
$t \in (0,T]$:
\begin{equation}
\label{Gronwall:generalExpression}
u(t)\leq C_T \int_0^t \frac{u(s)}{\sqrt{s}} ds + C_T \int_0^t \frac{u(s)}{\sqrt{t-s}} ds.
\end{equation}
Then, $u(t)=0$ for any $t \in (0,T]$.
\end{lemma}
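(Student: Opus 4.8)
The plan is to prove Lemma~\ref{ModifiedSgGronwall} by an elementary bootstrap argument that exploits two features of the inequality~\eqref{Gronwall:generalExpression}: both weights $s^{-1/2}$ and $(t-s)^{-1/2}$ are integrable, and the weight $s^{-1/2}$ is genuinely singular only near $t=0$, so it becomes harmless once $u$ is known to vanish on an initial interval.

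First I would introduce the non-decreasing envelope $w(t):=\sup_{0<s\le t}u(s)$, which is finite since $u$ is bounded and satisfies $u(s)\le w(s)\le w(t)$ for $0<s\le t$. Bounding $u(s)$ by the constant $w(t)$ inside both integrals in \eqref{Gronwall:generalExpression} and using $\int_0^t s^{-1/2}\,ds=\int_0^t (t-s)^{-1/2}\,ds=2\sqrt t$ gives $u(t)\le 4C_T\sqrt t\,w(t)$ for every $t\in(0,T]$. The right-hand side $t\mapsto 4C_T\sqrt t\,w(t)$ is a product of non-negative non-decreasing functions, hence non-decreasing, so taking the supremum over $(0,t]$ yields $w(t)\le 4C_T\sqrt t\,w(t)$. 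Setting $\tau:=\min\{T,(8C_T)^{-2}\}$ we have $4C_T\sqrt t\le\tfrac12$ for all $t\in(0,\tau]$, whence $\tfrac12 w(t)\le 0$; since $w\ge 0$ this forces $w\equiv 0$, i.e. $u\equiv 0$, on $(0,\tau]$.

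Next I would propagate this to $(0,T]$ by a finite induction on the length of the interval on which $u$ vanishes. Assume $u\equiv 0$ on $(0,a]$ for some $a\in[\tau,T)$. Then both integrals in \eqref{Gronwall:generalExpression} only involve $s\in(a,t]$, where $s^{-1/2}\le a^{-1/2}\le\tau^{-1/2}$; introducing $\tilde w(t):=\sup_{a<s\le t}u(s)$ with $\tilde w(a)=0$ and repeating the previous estimate gives
\[
u(t)\le\Big(\tfrac{C_T}{\sqrt{\tau}}\,(t-a)+2C_T\sqrt{t-a}\Big)\,\tilde w(t),\qquad t\in[a,T],
\]
whose right-hand side is once more non-decreasing in $t$. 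Taking the supremum over $(a,t]$ yields $\tilde w(t)\le\big(\tfrac{C_T}{\sqrt\tau}(t-a)+2C_T\sqrt{t-a}\big)\tilde w(t)$, and choosing $\eta>0$ small enough that $\tfrac{C_T}{\sqrt\tau}\eta+2C_T\sqrt\eta\le\tfrac12$ — a condition depending only on $C_T$ and $\tau$, hence only on $C_T$ and $T$, and crucially independent of $a$ — forces $\tilde w\equiv 0$, i.e. $u\equiv 0$, on $(a,\min\{a+\eta,T\}]$. Starting from $a=\tau$ and iterating, after at most $\lceil (T-\tau)/\eta\rceil$ steps we obtain $u\equiv 0$ on $(0,T]$.

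The only delicate point is the interplay of the two singularities: the $(t-s)^{-1/2}$ weight is dealt with everywhere by the monotone-envelope trick (which only uses integrability of the kernel and the fact that a product of non-negative non-decreasing functions is non-decreasing), while the $s^{-1/2}$ weight is singular only at $t=0$ and is neutralised as soon as $u$ vanishes on $(0,\tau]$; this is precisely why the induction step can be carried out with a step size $\eta$ that does not shrink as $a$ increases. A slightly less elementary alternative would be to iterate the positive, monotone linear operator $Lf(t):=C_T\int_0^t\big(s^{-1/2}+(t-s)^{-1/2}\big)f(s)\,ds$: from $u\le Lu$ and monotonicity one gets $u\le L^n M$ with $M:=\sup_{[0,T]}u$, and a Beta-function computation shows $L^n M(t)\le \mathrm{const}\cdot C_T^{\,n}T^{n/2}/\sqrt{n!}\to 0$, so $u\equiv 0$.
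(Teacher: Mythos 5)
Your proof is correct. Its first stage is exactly the paper's: bound $u(s)$ by the running supremum, use $\int_0^t s^{-1/2}ds=\int_0^t(t-s)^{-1/2}ds=2\sqrt t$ to get $\sup_{s\le t}u(s)\le 4C_T\sqrt t\,\sup_{s\le t}u(s)$, and conclude $u\equiv 0$ on an initial interval whose length depends only on $C_T$ (the paper takes $T^\star=1/(32C_T^2)$, you take $\min\{T,(8C_T)^{-2}\}$; both work). Where you diverge is the continuation past that interval. The paper makes the same observation as you — once $u$ vanishes on $(0,T^\star]$ the weight $s^{-1/2}$ is bounded by $T^{\star-1/2}$ there — but then shifts time by $T^\star$, absorbs the now-regular weight into the constant, and invokes the classical singular Gronwall lemma (Henry, Lemma 7.1.1) \emph{once} on the whole remaining interval $[T^\star,T]$. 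You instead keep the argument entirely elementary: you rerun the sup-envelope estimate on windows $(a,a+\eta]$ of a fixed length $\eta$ chosen so that $\tfrac{C_T}{\sqrt\tau}\eta+2C_T\sqrt\eta\le\tfrac12$, and advance by finite induction; the crucial point that $\eta$ does not depend on $a$ is correctly identified and justified. The trade-off is clear: the paper's route is shorter but leans on a cited lemma, while yours is self-contained and uses nothing beyond integrability of the two kernels and monotonicity of the envelope. Your alternative sketch via iterating the operator $Lf(t)=C_T\int_0^t(s^{-1/2}+(t-s)^{-1/2})f(s)\,ds$ and the Beta-function decay of $L^nM$ is also sound, and amounts to proving directly a two-weight version of the very singular Gronwall lemma the paper cites.
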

\begin{proof}
Set $u_t^\ast := \sup_{s\leq t} u(s)$. The inequality 
\eqref{Gronwall:generalExpression} implies that
$$u_t^\ast \leq 4C_T \sqrt{t} u_t^\ast.$$
Set $T^\ast:= \frac{1}{32 C_T^2}$. If $T\leq T^\ast$, then for $t\leq 
T$, we have $u_t^\ast \leq \frac{u_t^\ast}{\sqrt{2}}$. Thus, 
$u_t^\ast=0$ for every $t\leq T$ and the lemma is proved. If $T> 
T^\ast$, for $T^\ast<t< T $,
$$u(t)\leq C_T 
\int_0^{T^\ast}\left(\frac{1}{\sqrt{t-s}}+\frac{1}{\sqrt{s}}\right)u(s)ds
 + \int_{T^\ast}^t 
\left(\frac{1}{\sqrt{t-s}}+\frac{1}{\sqrt{s}}\right)u(s)ds.$$
 The first integral is null, since $u(T^\ast)=0$. Thus,
 $$u(t)\leq \int_0^{t-T^\ast} \left( \frac{1}{\sqrt{t-T^\ast-\theta}} + 
 \frac{1}{\sqrt{T^\ast+\theta}} \right)u(T^\ast+\theta)d\theta\leq 
 \int_0^{t-T^\ast} \left( \frac{1}{\sqrt{t-T^\ast-\theta}} + 
 \frac{1}{\sqrt{T^\ast}} \right)u(T^\ast+\theta)d\theta.$$
 For $0<s\leq T-T^\ast$, define $v(s):= u(s+T^\ast)$. The previous 
 inequality becomes:
 $$v(t-T^\ast) \leq C_T \int_0^{t-T^\ast}\left( 
 \frac{1}{\sqrt{t-T^\ast-\theta}} + \frac{1}{\sqrt{T^\ast}} 
 \right)v(\theta)d\theta
 \leq \frac{\sqrt{T^\ast}+\sqrt{T-T^\ast}}{\sqrt{T^\ast}} 
 \int_0^{t-T^\ast} \frac{v(\theta)}{\sqrt{t-T^\ast-\theta}}d\theta.$$
 Setting $t-T^\ast=: \tau$, we thus have
 $$ v(\tau)\leq \frac{\sqrt{T^\ast}+\sqrt{T-T^\ast}}{\sqrt{T^\ast}} 
 \int_0^\tau \frac{v(\theta)}{\sqrt{\tau -\theta}}d\theta,
 ~~~\forall 0\leq\tau< T- T^\ast. $$
 Now we are in a position to apply a standard singular Gronwall lemma 
 (see 
 \cite[Lem. 7.1.1]{DanHenry}) and conclude that $v(\tau)=0$ for 
 $0\leq\tau \leq T-T^\ast$. Thus, $u(t)=0$ for $T^\ast\leq t\leq T$.
\end{proof}

\section{A local existence and uniqueness result for Equation \eqref{NLSDEsimple}}
\label{sec:local}
Set
\begin{equation}
\label{def:D(T)}
D(T):= \int_0^T \int_\R |K_t(x)|dx dt< \infty.
\end{equation}
The main result in this section is the following theorem.
\begin{theorem}
\label{localEX}
Let $T_0>0$ be such that $D(T_0)<1$. Assume $p_0 \in L^1(\R)$ and $b\in 
L^\infty((0,T_0)\times \R)$ continuous w.r.t. space variable. Under 
Hypothesis~$(\text{H})$, Equation \eqref{NLSDEsimple} admits a unique
weak solution up to $T_0$ such that the probability distributions $\P \circ X_t^{-1}$ admit densities which satisfy \eqref{density_infty}.
\end{theorem}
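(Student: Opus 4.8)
The plan is to set up a Picard-type fixed point argument on the space of time-marginal flows. Given a candidate flow $q = (q_t)_{t\in(0,T_0]}$ of probability densities satisfying the a priori bounds \eqref{density_space} and \eqref{density_infty}, define the drift $\beta(t,x) := b(t,x) + B(t,x;q)$, which is bounded on $[0,T_0]\times\R$ by Hypothesis (H.5) and the boundedness of $b$. By classical results on SDEs with bounded measurable drift (e.g. Veretennikov), the linear SDE $dX_t = \beta(t,X_t)\,dt + dW_t$, $X_0\sim p_0$, has a unique weak solution; call $\Phi(q)_t$ the law of $X_t$. Theorem \ref{QianZhengUSTh} and Corollary \ref{prel:densityEST} show that $\Phi(q)_t$ has a density satisfying $\|\Phi(q)_t\|_{L^\infty(\R)} \leq \frac{C_{T_0}}{\sqrt t}$ — more precisely, using the splitting $p_0 = p_0\mathbf{1}_{|p_0|\le M} + p_0\mathbf{1}_{|p_0|>M}$ together with the two cases of Corollary \ref{prel:densityEST}, one gets the bound \eqref{density_infty} with a constant independent of $q$, and interpolating with $\|\Phi(q)_t\|_{L^1}=1$ yields \eqref{density_space}. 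Hence $\Phi$ maps the set $\mathcal{E}_{T_0}$ of flows satisfying \eqref{density_space}–\eqref{density_infty} into itself.

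Next I would show $\Phi$ is a contraction on $\mathcal{E}_{T_0}$ for the $L^1$-in-space, sup-in-time "distance" $d(q^1,q^2) := \sup_{t\le T_0}\|q^1_t - q^2_t\|_{L^1(\R)}$ (or rather one shows the mild-equation map is a contraction — by Proposition \ref{MeqUniq} part 1, any weak solution's marginals solve \eqref{mildEQ}, and conversely a solution of \eqref{mildEQ} in $\mathcal{E}_{T_0}$ can be realized as the marginal flow of a weak solution via the superposition/Markovian selection argument). Writing the mild formula for $\Phi(q^1)_t - \Phi(q^2)_t$ and estimating exactly as in the uniqueness part of Proposition \ref{MeqUniq}, one obtains
\begin{equation*}
\|\Phi(q^1)_t - \Phi(q^2)_t\|_{L^1(\R)} \leq C_{T_0}\int_0^t \frac{\|q^1_s - q^2_s\|_{L^1(\R)}}{\sqrt{t-s}}\,ds + C_{T_0}\int_0^t \frac{\|q^1_s-q^2_s\|_{L^1(\R)}}{\sqrt s}\,ds.
\end{equation*}
Taking the supremum in $t\le T_0$ and using $\int_0^t (t-s)^{-1/2}ds + \int_0^t s^{-1/2}ds \le 4\sqrt{T_0}$ gives $d(\Phi(q^1),\Phi(q^2)) \le 4C_{T_0}\sqrt{T_0}\, d(q^1,q^2)$; shrinking $T_0$ further if necessary (note $D(T_0)<1$ is what controls the constant $C_{T_0}$ appearing in the $B(\cdot;q^1)-B(\cdot;q^2)$ term via $\|B(s,\cdot;q^1)-B(s,\cdot;q^2)\|_{L^1}\le \int_0^s\|K_{s-\tau}\|_{L^1}\|q^1_\tau-q^2_\tau\|_{L^1}d\tau$ and $D(T_0)<1$) makes this a strict contraction. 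The Banach fixed point theorem then yields a unique fixed point $p\in\mathcal E_{T_0}$ of the mild equation, and uniqueness in the full class of weak solutions with marginals satisfying \eqref{density_infty} follows from Proposition \ref{MeqUniq} part 2.

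The main obstacle is the construction of an actual weak solution $(\Omega,\F,\P,(\F_t),X,W)$ from the fixed-point density flow $p$: one must exhibit a process whose marginals are exactly $(p_t)$ and which solves the nonlinear SDE \eqref{NLSDEsimple} pathwise. The natural route is to solve the now-linear SDE $dX_t = (b(t,X_t)+B(t,X_t;p))\,dt + dW_t$ — whose drift is bounded and, by Hypothesis (H.1)–(H.3) and Remark \ref{rk:cond-integr-densite}, well-defined — obtain its weak solution, check via Proposition \ref{MeqUniq} part 1 that its marginals $\tilde p_t$ solve the same mild equation \eqref{mildEQ} with the \emph{same} coefficient $B(\cdot;p)$, and then invoke the uniqueness of the linear mild (Fokker–Planck) equation to conclude $\tilde p_t = p_t$, so that $B(t,x;\tilde p)=B(t,x;p)$ and the process genuinely solves the nonlinear equation. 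The delicate points here are: (i) verifying the density bound \eqref{density_infty} for this linear SDE's marginals, which is where Corollary \ref{prel:densityEST} combined with the $L^1$-ness of $p_0$ and the splitting argument is essential (since $p_0$ is merely $L^1$, one cannot directly bound $\|p_t\|_\infty$ by $\|p_0\|_\infty$; one uses $\|p_t\|_\infty \le \|g(t,\cdot)\ast p_0\|_\infty + \text{(drift term)} \le \frac{C}{\sqrt t} + \ldots$ via the Gaussian smoothing in Corollary \ref{prel:densityEST}(2) with $p=1$); and (ii) the measurability/continuity needed to apply the Itô–Krylov formula, handled as in Section \ref{sec:prel}.
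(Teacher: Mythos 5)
Your route (a Picard/Banach fixed point on the space of marginal flows, then realizing the fixed point through the linear SDE with drift $b+B(\cdot;p)$) is genuinely different from the paper's, which iterates SDEs with frozen marginals, derives uniform drift bounds, proves tightness and passes to the limit in a nonlinear martingale problem before identifying the marginals via Proposition \ref{MeqUniq}; your end-game (linear SDE plus Proposition \ref{MeqUniq}) coincides with the paper's identification and uniqueness step and is fine. The first genuine gap is the self-map property of $\Phi$ on $\mathcal{E}_{T_0}$. Corollary \ref{prel:densityEST} does \emph{not} give $\|\Phi(q)_t\|_{L^\infty(\R)}\le C_{T_0}/\sqrt{t}$ with a constant independent of $q$: it gives $C/\sqrt{t}+\beta$, where $\beta$ bounds the drift $b+B(\cdot;q)$, and $|B(t,x;q)|$ is controlled through (H.4) (not (H.5), which only applies when a single fixed density such as $p_0$ appears, i.e.\ for the first frozen iterate) by the very constant assumed in \eqref{density_infty} for $q$. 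So the output constant is an affine function of the input constant, and closing this loop is exactly where $D(T_0)<1$ is needed: in the paper it appears as the recursion $\beta^{k+1}\le b_0+C_{T_0}+\beta^k D(T_0)$, bounded only because the slope $D(T_0)$ is less than one. Your splitting of $p_0$ does not touch this issue, and your placement of $D(T_0)<1$ inside the contraction constant is a misdiagnosis of its role.

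The second gap is the contraction estimate itself. Estimating "as in the uniqueness part of Proposition \ref{MeqUniq}" cannot produce the displayed inequality, because the mild equation for $\Phi(q^i)$ has the \emph{output} density $\Phi(q^i)_s$ multiplying $b+B(s,\cdot;q^i)$; the first term on the right-hand side must therefore read $\|\Phi(q^1)_s-\Phi(q^2)_s\|_{L^1(\R)}$, not $\|q^1_s-q^2_s\|_{L^1(\R)}$. One must first absorb that term by the singular Gronwall lemma (Lemma \ref{ModifiedSgGronwall} or an iteration of the inequality), after which the Lipschitz constant of $\Phi$ is governed by the (H.4) constants, the class constant and $\sqrt{T_0}$ — it is not controlled by $D(T_0)$ at all. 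Your fallback of "shrinking $T_0$ further if necessary" then proves a statement weaker than Theorem \ref{localEX}, which asserts well-posedness for \emph{any} $T_0$ with $D(T_0)<1$ (this would still suffice to launch the global construction of Section 6, but it is not the stated theorem); the alternative, showing that some iterate of $\Phi$ is a strict contraction on $[0,T_0]$, is not carried out in your proposal.
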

\paragraph{Iterative procedure.} Consider the 
following sequence of SDE's. For $k=1$
\begin{equation}
\label{iter1}
\begin{cases}
& dX_t^1= b(t,X_t^1)~dt +\Big\{\int_0^t (K_{t-s}\ast 
p_0)(X_t^1)ds\Big\} dt+dW_t, \\
& X_0^1 \sim p_0.
\end{cases}
\end{equation}
Denote the drift of this equation by $b^1(t,x)$. Supposing that, in the 
step $k-1$, the one dimensional time marginals of the law of the 
solution have densities $(p^{k-1}_t)_{t\geq 0}$, we define the drift in the step 
$k$ as
$$b^k(t,x,p^{k-1})=b(t,x) + B(t,x;p^{k-1}).$$
The corresponding SDE is
\begin{equation}
\label{iterk}
\begin{cases}
& dX_t^k=b^k(t,X_t^k,p^{k-1})dt+ dW_t,  \\
& X_0^k \sim p_0.
\end{cases}
\end{equation}
 
In order to prove the desired local existence and uniqueness result we 
set up the non-linear martingale problem related to \eqref{NLSDEsimple}.
\begin{mydef}
\label{defMP}
A probability measure $\Q$ on the canonical space 
$\mathcal{C}([0,T_0];\R)$ 
equipped with its canonical filtration and a canonical process $(w_t)$ is a solution to the non-linear 
martingale problem $(MP(p_0,T_0,b))$ if:
\begin{enumerate}[(i)]
\item $\Q_0 = p_0$.
\item For any $t\in (0,T_0]$, the one dimensional time marginals of $\Q$, denoted by $\Q_t$, have densities $q_t$ w.r.t. Lebesgue measure on $\R$. In addition, they satisfy 
\begin{equation}
\label{defMP:linfnorm}
\forall 0<t\leq T_0, \quad \|q_t\|_{L^\infty(\R)} \leq \frac{C_{T_0}}{\sqrt{t}}.
\end{equation}
\item For any $f \in C_K^2(\R)$ the process $(M_t)_{t\leq T_0}$, 
defined as
$$M_t:=f(w_t)-f(w_0)-\int_0^t \big[\frac{1}{2} \frac{\partial^2f}{\partial x^2}(w_u)+\frac{\partial f}{\partial x}(w_u)(b(u,w_u)+ \int_0^u \int K_{u- \tau}(w_u-y) q_\tau(y)dyd\tau\big)]du $$
is a $\Q$-martingale.
\end{enumerate}
\end{mydef}
\noindent

Notice that the arguments in Remark~\ref{rk:cond-integr-densite} 
justify that all the integrals in the definition of $M_t$ are well 
defined.

We start with the analysis of Equations \eqref{iter1}-\eqref{iterk}.
\begin{proposition}
\label{iteratesRes}
Same assumptions as  in Theorem \ref{localEX}. Then, for any $k\geq 
1$, Equations \eqref{iter1}-\eqref{iterk} Equations \eqref{iter1}-\eqref{iterk} admit unique weak solutions 
up to $T_0$.  For $ k\geq 1$, denote by $\P^k$ the law of $(X_t^k)_{t\leq T_0}$. Moreover, for $t\in (0,T_0]$, the time 
marginals $\P^k_t$ of $\P^k$ have densities $p^k_t$ w.r.t. Lebesgue 
measure on $\R$. Setting $\beta^k=\sup_{t\leq T_0} 
\|b^k(t,\cdot,p^{k-1})\|_{L^\infty(\R)}$ and $b_0:= \|b\|_{L^\infty(\R)}$, one has
$$ \forall 0<t\leq T_0, \quad \|p^k_t\|_{L^\infty(\R)} \leq \frac{C(b_0, T_0)}{\sqrt{t}} \quad \text{and} \quad \beta^k \leq C(b_0, T_0).$$
Finally, there exists a function $p^\infty \in L^\infty([0,T_0];L^1(\R))$ such that 
$$\sup_{t\leq T_0} \|p_t^k-p^\infty_t\|_{L^1(\R)} \to 0, \text{ as } k\to \infty.$$
Moreover,
\begin{equation}
\forall 0<t\leq T_0, \quad \|p^\infty_t\|_{L^\infty(\R)}\leq   \frac{C(b_0, T_0)}{\sqrt{t}}.
\label{ineq:limitEst}
\end{equation}
\end{proposition}
\begin{proof}
We proceed by induction. 
\paragraph{Case $k=1$.} In view of (H-5), one has $\beta^1\leq b_0 + C_{T_0} $. This implies that the equation \eqref{iter1} has a unique weak solution in $[0,T_0]$ with time marginal densities $(p^1_t(y)dy)_{t\leq T_0}$ which in view of \eqref{densityLp} satisfy
$$\forall t\in(0,T_0], \quad \|p^1_t\|_{L^\infty(\R)}\leq  
\frac{1}{\sqrt{2\pi t}}+\beta^1.$$
\paragraph{Case $k>1$.}
Assume now that the equation for $X^k$ has a unique weak solution and 
assume $\beta^k$  is finite. In addition, suppose that the one 
dimensional time marginals satisfy $$\forall t\in(0,T_0], \quad 
\|p^k_t\|_{L^\infty(\R)}\leq  \frac{1}{\sqrt{2\pi t}}+\beta^k.$$
In view of (H-4), the new drift satisfies
$$|b^{k+1}(t,x;p^k)|\leq b_0 + \int_0^t \|p^k_s\|_{L^\infty(\R)}\|K_{t-s}\|_{L^1(\R)}ds \leq  b_0 + \int_0^t (\frac{1}{\sqrt{2\pi s}}+\beta^k) \|K_{t-s}\|_{L^1(\R)}ds \leq b_0 + C_{T_0} +\beta^k D(T_0).$$
Thus, we conclude that $\beta^{k+1}\leq  b_0 + C_{T_0} +\beta^k D(T_0)$. Therefore, there exists a unique weak solution to the equation for $X^{k+1}$. Furthermore, by \eqref{densityLp}:
$$\forall t\in(0,T_0], \quad \|p^{k+1}_t\|_{L^\infty(\R)}\leq   
\frac{C_{T_0}}{\sqrt{t}}+\beta^{k+1}.$$
Notice that
$$\forall k>1, \quad \beta^{k+1}\leq  b_0 + C_{T_0} +\beta^k D(T_0) \quad \text{ and }\quad \beta^1\leq b_0 + C_{T_0}.$$
Thus, as by hypothesis $D(T_0)<1$, we have
\begin{equation}
\label{1stTimeHorizonBoundDrift}
\forall k\geq 1, \quad \beta^{k}\leq \frac{b_0 + C_{T_0}}{1-D(T_0)}+b_0 + C_{T_0}
\end{equation}
and
\begin{equation}
\label{1stTimeHorizonBoundDens}
\|p^k_t\|_{L^\infty(\R)} \leq  \frac{C_{T_0}}{\sqrt{t}}+\beta^k\leq  
\frac{C_{T_0}}{\sqrt{t}} + \frac{b_0+C_{T_0}}{1-D(T_0)} +b_0 + C_{T_0}.
\end{equation}
Finally, it remains to prove that the sequence $p^k$ converges in $L^\infty([0,T_0];L^1(\R))$. In order to do so,  we will prove $p^k$ is a Cauchy sequence.

Applying the same procedure as in Section \ref{subs:uniq}, one can derive the mild equation for $(p^k_t)_{t\in [0,T_0]}$. Thus, for every $k\geq 1$, the marginals  $(p^k_t)_{t\in (0,T_0]}$ satisfy the mild equation
 \begin{equation}
 \label{mildEQ_k}
 \forall t\in(0,T], \quad p_t^k= g_t\ast p_0-\int_0^t\frac{\partial g_{t-s}}{\partial x} \ast(p^k_s b^k(s,\cdot, p^{k-1}))ds
 \end{equation}
in the sense of the distributions. Assume for a moment that we have proved that for any $0<t\leq T_0$, one has
 \begin{equation}
 \label{p^kconverge}
 \|p_t^k-p_t^{k-1}\|_{L^1(\R)}\leq C_{T_0} \int_0^t \frac{\|p_s^{k-1}-p_s^{k-2}\|_{L^1(\R)}}{\sqrt{s}} \ ds.
 \end{equation}
Remember that $\int_0^t f(u_1)\dots \int_0^{u_{k-1}}f(u_k) du_k \ \dots du_1= \frac{1}{k!} \left( \int_0^t f(u) du\right)^k$ for any positive integrable function $f$. Then, iterating \eqref{p^kconverge} one gets,
 $$\|p_t^k-p_t^{k-1}\|_{L^1(\R)} \leq 2 \frac{(C_{T_0} \sqrt{t})^{k-1}}{(k-1)!}.$$
Therefore,  $\sup_{t\leq T_0} \|p_t^k-p_t^{k-1}\|_{L^1(\R)} \to 0 $, as $k \to \infty$ as desired.

It remains to prove the inequality \eqref{p^kconverge}. In the sequel $C(T_0)>0$ will denote a constant that depends on $T_0$ and may change from line to line. In view of \eqref{mildEQ_k}, one has
\begin{equation}
\label{ineq:difpk}
\begin{split}
\|p_t^k-p_t^{k-1}\|_{L^1(\R)} & \leq \int_0^t \|\frac{\partial g_{t-s}}{\partial x} \ast(p^k_s b^k(s,\cdot, p^{k-1})- p^{k-1}_s b^{k-1}(s,\cdot, p^{k-2})) \|_{L^1(\R)} \ ds\\
& \leq \int_0^t \frac{1}{\sqrt{t-s}} \|b^{k-1}(s,\cdot, p^{k-2})(p_s^k-p_s^{k-1})\|_{L^1(\R)} \ ds\\
& \quad + \int_0^t \frac{1}{\sqrt{t-s}} \|(b^k(s, \cdot, p^{k-1})-b^{k-1}(s,\cdot, p^{k-2}))p_s^k\|_{L^1(\R)} \ ds \\
&=: I + II.
\end{split}
\end{equation}
According to \eqref{1stTimeHorizonBoundDrift}, one has
$$I \leq C(T_0)\int_0^t   \frac{\|p_s^k-p_s^{k-1}\|_{L^1(\R)}}{\sqrt{t-s}} \ ds.$$
 According to \eqref{1stTimeHorizonBoundDens}, one has
 $$II\leq C(T_0) \int_0^t \frac{1}{\sqrt{t-s}\sqrt{s}} \int _0^s \|K_{s-u}\ast(p^{k-1}_u-p^{k-2}_u) \| _{L^1(\R)} \ du  \ ds. $$
 Convolution inequality and Fubini-Tonelli's theorem lead to
 $$II\leq C(T_0) \int_0^t \|p^{k-1}_u-p^{k-2}_u \| _{L^1(\R)}  \int _u^t \frac{1}{\sqrt{t-s}\sqrt{s}} \|K_{s-u}\| _{L^1(\R)}  \ ds  \ du.$$
 Apply the change of variables $t-s=s'$. It comes,
 $$II \leq C(T_0) \int_0^t \frac{1}{\sqrt{u}}\|p^{k-1}_u-p^{k-2}_u \| _{L^1(\R)}  \int _0^{t-u} \frac{1}{\sqrt{s'}} \|K_{t-u-s'}\| _{L^1(\R)}  \ ds'  \ du. $$ 
 According to (H-4) one has
 $$II\leq C(T_0) \int_0^t \frac{1}{\sqrt{u}}\|p^{k-1}_u-p^{k-2}_u \| _{L^1(\R)}    \ du.$$ 
 Coming back to \eqref{ineq:difpk} and using our above estimates on $I$ and $II$, we obtain
 $$ \|p_t^k-p_t^{k-1}\|_{L^1(\R)}\leq C(T_0)\int_0^t   \frac{\|p_s^k-p_s^{k-1}\|_{L^1(\R)}}{\sqrt{t-s}} \ ds + C(T_0) \int_0^t \frac{1}{\sqrt{u}}\|p^{k-1}_u-p^{k-2}_u \| _{L^1(\R)}    \ du.$$
 We are in the situation 
 $$\Phi(t):= \|p_t^k-p_t^{k-1}\|_{L^1(\R)}\leq A(t) + C \int_0^t \frac{\Phi(s)}{\sqrt{t-s}} \ ds,$$
 where $A(t)\geq 0$ is a bounded increasing function. Iterate this relation and use the monotonicity of $A$. It comes
 $$\Phi(t)\leq C_T A(t) + C^2 \int_0^t \frac{1}{\sqrt{t-s}} \int_0^s \frac{\Phi(u)}{\sqrt{s-u}} \ du \ ds.$$
 Apply Fubini's theorem to get
 $$\Phi(t)\leq C_T A(t) + C^2 \int_0^t \Phi(u)  \int_u^t \frac{1}{ \sqrt{t-s}\sqrt{s-u}} \ ds \ du.$$
Notice that  $\int_u^t \frac{1}{ \sqrt{t-s}\sqrt{s-u}} \ ds = \int_0^1 
\frac{1}{ \sqrt{1-x}\sqrt{x}} \ dx$. Now, apply Gronwall's lemma
to get \eqref{p^kconverge} and the convergence of $p^k$ to 
$p^\infty$.

In order to obtain \eqref{ineq:limitEst}, fix $t\in (0,T]$ and use \eqref{1stTimeHorizonBoundDens} and the fact that the convergence in $L^1(\R)$ implies the almost sure convergence of a subsequence.
\end{proof}
The following is an obvious consequence of the preceding proposition:
\begin{cor}
\label{allSubsSameLim}
Same assumptions as in Proposition \ref{iteratesRes}. Assume that $(\P^k)_{k\geq 1}$ admits a weakly convergent subsequence  $(\P^{n_k})_{k\geq 1}$.
Denote its limit by $\Q$. Then for any $t\in (0, T_0]$, one has that $\Q_t(dx)= p_t^\infty(x) dx$, where $p^\infty$ is constructed in Proposition \ref{iteratesRes}. 
\end{cor}
\begin{proposition}
\label{solMP}
Same assumptions as  in Theorem \eqref{localEX}. Then, 
\begin{enumerate}[1)]
\item The family of probabilities $(\P^k)_{k> 1} $ is tight.
\item Any weak limit $\P^\infty$ of a convergent subsequence of 
$(\P^k)_{k\geq 1}$ solves $(MP(p_0,T_0,b))$.
\end{enumerate}
\end{proposition}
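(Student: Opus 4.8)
The plan is to prove the two assertions of Proposition~\ref{solMP} in order: tightness of $(\P^k)_{k>1}$, and then the identification of any weak limit as a solution of the nonlinear martingale problem $(MP(p_0,T_0,b))$.

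For tightness, I would work with the canonical decomposition $X^k_t = X^k_0 + \int_0^t b^k(s,X^k_s,p^{k-1})\,ds + W_t$. The bound \eqref{1stTimeHorizonBoundDrift} gives $\sup_k \beta^k < \infty$, so the drift terms are uniformly bounded by a deterministic constant; hence for $s<t\leq T_0$ one has $\E|X^k_t - X^k_s| \leq \sup_k\beta^k\,(t-s) + \E|W_t - W_s|$, and more usefully a fourth-moment (Kolmogorov) estimate $\E|X^k_t - X^k_s|^4 \leq C_{T_0}\,(t-s)^2$ by Jensen on the drift part plus the Brownian increment. Since $X^k_0 \sim p_0$ is a fixed law and the modulus-of-continuity estimate is uniform in $k$, Kolmogorov's tightness criterion (or equivalently Aldous's criterion, using that the drift is uniformly bounded) yields tightness of $(\P^k)$ in $\mathcal{P}(C([0,T_0],\R))$.

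For the second part, let $\P^\infty$ be the weak limit of a subsequence $(\P^{k_j})$; I must check the three conditions of Definition~\ref{defMP}. Condition (i), $\P^\infty_0 = p_0$, is immediate since every $\P^k_0 = p_0$ and evaluation at $t=0$ is continuous. Condition (ii) is the delicate quantitative point: I need the one-dimensional marginals $\Q_t := \P^\infty_t$ to be absolutely continuous with the bound $\|q_t\|_{L^2} \leq C_{T_0} t^{-1/4}$. The uniform $L^\infty$ estimate \eqref{1stTimeHorizonBoundDens}, $\|p^k_t\|_{L^\infty} \leq C_{T_0}(t^{-1/2}+1)$, passes to the limit in a weak sense — against any $\varphi\in C_K(\R)$, $\int \varphi\,d\P^{k_j}_t \to \int\varphi\,d\P^\infty_t$, while the left side is $\leq \|\varphi\|_{L^1}\,C_{T_0}(t^{-1/2}+1)$, so $\P^\infty_t$ has a density $q_t$ with the same $L^\infty$ bound; interpolating this $L^\infty$ bound with total mass $1$ (i.e.\ $\|q_t\|_{L^2}^2 \leq \|q_t\|_{L^\infty}\|q_t\|_{L^1}$) gives $\|q_t\|_{L^2}\leq C_{T_0}t^{-1/4}$, which is \eqref{defMP:l2norm}; it also gives \eqref{density_infty}. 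For condition (iii), I would show that for fixed $f\in C_K^2(\R)$, $0\leq s_1<\dots<s_n\leq s<t\leq T_0$ and $\phi\in C_b(\R^n)$,
\begin{equation*}
\E^{\P^\infty}\!\Big[\big(M_t-M_s\big)\,\phi(x(s_1),\dots,x(s_n))\Big]=0,
\end{equation*}
where $M$ is the functional in Definition~\ref{defMP}(iii) built with the limiting marginals $q_\tau$. Since $(X^{k_j},\P^{k_j})$ solves the linear martingale problem with drift $b(u,\cdot)+B(u,\cdot;p^{k_j-1})$, the analogous expectation with $M$ replaced by $M^{k_j}$ (the functional built with $p^{k_j-1}_\tau$) vanishes identically. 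So it suffices to pass to the limit: the terms involving only $f$, $f''$ and the bounded continuous $b$ converge by weak convergence of $\P^{k_j}$ plus the continuity/boundedness of the integrands (the test functional is bounded and continuous on $C([0,T_0],\R)$ away from the singular drift), and the remaining task is to show
\begin{equation*}
\int_s^t \E^{\P^{k_j}}\!\Big[f'(x(u))\!\int_0^u\!\!\int K_{u-\tau}(x(u)-y)\,p^{k_j-1}_\tau(y)\,dy\,d\tau\;\phi\Big]du
\;\longrightarrow\;
\int_s^t \E^{\P^{\infty}}\!\Big[f'(x(u))\!\int_0^u\!\!\int K_{u-\tau}(x(u)-y)\,q_\tau(y)\,dy\,d\tau\;\phi\Big]du.
\end{equation*}

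This last convergence is the main obstacle, because the kernel $K_{u-\tau}(x(u)-y)$ is singular both as $\tau\to u$ and is merely $L^1\cap L^2$ in space, so one cannot simply invoke weak convergence against a bounded continuous function. My plan is to split the difference into two pieces: (a) replacing $p^{k_j-1}_\tau$ by $q_\tau$ inside the convolution, and (b) passing to the limit in the outer expectation with $q_\tau$ frozen. For (a), I would use that $p^{k_j-1} \to q$ in a suitable sense — concretely, since the mild equation \eqref{mildEQ} is stable, one expects $\|p^{k_j-1}_\tau - q_\tau\|_{L^1}\to 0$ for a.e.\ $\tau$ (or at least along a further subsequence, via the uniqueness argument of Proposition~\ref{MeqUniq} applied to identify all limit points), and then control $\int_0^u \|K_{u-\tau}\|_{L^\infty\text{-type bound}}\,\|p^{k_j-1}_\tau-q_\tau\|_{L^1}\,d\tau$ using the $L^1$–$L^2$ bounds on $K$ from $(H.1)$, $(H.4)$ together with the $L^2$ bounds on the densities; Hypothesis $(H.4)$, which controls $\int_0^u \|K_{u-\tau}\|_{L^2} s^{-1/4}$-type quantities, is exactly what makes these $\tau$-integrals finite and lets dominated convergence run. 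For (b), with $q_\tau$ fixed, the map $w\mapsto \int_0^u\int K_{u-\tau}(w(u)-y)q_\tau(y)\,dy\,d\tau$ is continuous in $w$ at a.e.\ path (using $(H.2)$, continuity of $K_t$ in space, and $(H.3)$ to handle the $t\to0$ behaviour on a Lebesgue-null set), and is bounded by $C_{T_0}$ uniformly via $(H.5)$; hence $f'(x(u))\phi$ times this quantity is a bounded continuous functional for a.e.\ $u$, and weak convergence $\P^{k_j}\Rightarrow\P^\infty$ plus bounded convergence in $u$ finishes the job. The one genuinely subtle point throughout is that all the $\tau$-integrals and $u$-integrals must be justified by the integrability package in Hypothesis~(H) (especially $(H.4)$–$(H.6)$), exactly as in Remark~\ref{rk:cond-integr-densite}; I would state a single dominating-function lemma at the outset and invoke it for each of the splittings above.
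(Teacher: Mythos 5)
Your tightness argument and your treatment of conditions (i)--(ii) of Definition~\ref{defMP} are sound and essentially the paper's (the paper gets the marginal densities by $L^2$ duality and Riesz representation rather than by passing the $L^\infty$ bound to the limit and interpolating, but both work). The problem is in your step (a) for condition (iii), which you yourself flag as the main obstacle: to replace $p^{k_j-1}_\tau$ by $q_\tau$ inside the convolution you invoke strong convergence $\|p^{k_j-1}_\tau-q_\tau\|_{L^1}\to 0$, justified either by ``stability of the mild equation'' or by ``the uniqueness argument of Proposition~\ref{MeqUniq} applied to identify all limit points''. Neither is available. The iterates $p^{k-1}$ are \emph{not} solutions of the nonlinear mild equation \eqref{mildEQ}: they satisfy a linear equation whose drift involves the lagged density $p^{k-2}$, so Proposition~\ref{MeqUniq} says nothing about their limit points; no stability estimate for the iteration is proved anywhere (and proving a contraction-type stability bound would be a different proof strategy altogether). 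Moreover, at this stage all you know is $\P^{k_j}\Rightarrow\P^\infty$, which gives only weak convergence of the time marginals; weak convergence of probability measures does not yield $L^1$ convergence of their densities, even along a further subsequence. So as written, step (a) does not go through.

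The paper closes exactly this step without any strong convergence. Fix $u>\tau$ and $x$: by (H.2) the function $y\mapsto K_{u-\tau}(x-y)$ is bounded and continuous, so weak convergence of the marginals gives the pointwise convergence $(K_{u-\tau}\star p^{k-1}_\tau)(x)\to(K_{u-\tau}\star p^\infty_\tau)(x)$. Then one dominates: writing the term $I$ as an integral against the density $p^k_u$ (with $\|p^k_u\|_{L^2}\le C u^{-1/4}$ uniformly in $k$) and using Cauchy--Schwarz in $x$ together with
\begin{equation*}
\Big(\int K_{u-\tau}(x-y)\big(p^{k-1}_\tau(y)-p^\infty_\tau(y)\big)dy\Big)^2
\le \|K_{u-\tau}\|_{L^2(\R)}^2\,\frac{C_T}{\sqrt{\tau}},
\end{equation*}
which is integrable in $(x,\tau,u)$ on the relevant domain thanks to $f\in C^2_K$ and (H.1), (H.4), two applications of dominated convergence give $I\to 0$. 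Your step (b) is then the same as the paper's term $II$ (continuity and boundedness of the frozen functional; note the uniform bound comes from (H.4) plus the $L^2$ bound on $q_\tau$, i.e.\ $f_2$, rather than from (H.5)). Replacing your strong-convergence argument in (a) by this weak-convergence-plus-domination argument is the missing ingredient; the rest of your outline then matches the paper's proof.
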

\begin{proof} 
In view of \eqref{1stTimeHorizonBoundDrift}, we obviously have
$$ \exists C_{T_0}>0,~~~\sup_k \E|X_t^k-X_s^k|^4\leq C_{T_0} |t-s|^2, 
\quad \forall \  0\leq s\leq t\leq T_0.$$
This is a sufficient condition for tightness (see 
e.g. ~\cite[Chap.2, Pb.4.11]{KaratzasShreve}).

Let $(\P^{n_k})$ be a weakly convergent subsequence of $(\P^k)_{k\geq 1} $ and let $\P^\infty$ denote its limit. Let us check that $\P^\infty$ solves the martingale problem~$(MP(p_0,T_0,b))$. To simplify the notation, we below write $\P^k$ instead of $\P^{n_k}$ and $\bar{p}^{k-1}$ instead of $p^{n_k-1}$. 
\begin{enumerate}[i)]
\item Each $\P_0^k$ has density $p_0$, and therefore $\P^\infty_0$ also has density $p_0$.
\item Corollary \ref{allSubsSameLim} implies that the time marginals of $\P^\infty$ are absolutely continuous with respect to Lebesgue's measure and satisfy \eqref{defMP:linfnorm}. 
%
\item Set
$$M_t:= f(w_t)-f(w_0)-\int_0^t \big[\frac{1}{2} 
\frac{\partial^2f}{\partial x^2}(w_u)+\frac{\partial f}{\partial 
x}(w_u)(b(u,w_u)+ \int_0^u (K_{u-\tau}\ast p^\infty_\tau)(w_u) 
d\tau)\big]du,$$
We have to prove
$$\E_{\P^\infty}[(M_t-M_s)\phi(w_{t_1}, \dots, w_{t_N})] = 0, \quad \forall \phi \in C_b(\R^N) \text{ and } 0\leq t_1<\dots<t_N<s \leq t\leq T_0, N\geq 1.$$
The process
$$M_t^k:= f(w_t)-f(x(0))- \int_0^t \big[\frac{1}{2} 
\frac{\partial^2f}{\partial x^2}(w_u)+\frac{\partial f}{\partial 
x}(w_u)(b(u,w_u)+ \int_0^u  (K_{u-\tau}\ast 
\bar{p}^{k-1}_\tau)(w_u)d\tau\big)]du $$
is a martingale under $\P^k$. Therefore,
it follows that
\begin{align*}
    & 0=\E_{\P^k}[(M^k_t-M^k_s)\phi(w_{t_1}, \dots, w_{t_N})] \\
    & = \E_{\P^k}[\phi(\dots)(f(w_t)-f(w_s))] + 
    \E_{\P^k}[\phi(\dots)\int_s^t \frac{1}{2} 
    \frac{\partial^2f}{\partial x^2}(w_u)du ]\\
    &+\E_{\P^k}[\phi(\dots)\int_s^t \frac{\partial f}{\partial x}(w_u) 
    b(u,w_u) du ]+\E_{\P^k}[\phi(\dots)\int_s^t \frac{\partial 
    f}{\partial x}(w_u) \int_0^u (K_{u-\tau}\ast 
    \bar{p}^{k-1}_\tau)(w_u)\ d\tau \ du ].
\end{align*}
Since $(\P^k)$ weakly converges to $\P^\infty$, the first two terms on the r.h.s. obviously converge.
Now, observe that
\begin{align*}
    & \E_{\P^k}[\phi(\dots)\int_s^t \frac{\partial f}{\partial x}(w_u) 
    \int_0^u (K_{u-\tau}\ast\bar{p}^{k-1}_\tau)(w_u)\ d\tau \ du  ] \\
    &- \E_{\P^\infty}[\phi(\dots)\int_s^t \frac{\partial f}{\partial 
    x}(w_u) \int_0^u (K_{u-\tau}\ast p^\infty_\tau)(w_u)\ d\tau \ du ] \\
    & = \big(\E_{\P^k}[\phi(\dots)\int_s^t \frac{\partial f}{\partial 
    x}(w_u) \int_0^u (K_{u-\tau}\ast \bar{p}^{k-1}_\tau)(w_u)\ d\tau \ du  ]\\
    &- \E_{\P^k}[\phi(\dots)\int_s^t \frac{\partial f}{\partial x}(w_u) 
    \int_0^u (K_{u-\tau}\ast p^{\infty}_\tau)(w_u)\ d\tau \ du  ]\big) \\
    & + \big(\E_{\P^k}[\phi(\dots)\int_s^t \frac{\partial f}{\partial 
    x}(w_u) \int_0^u (K_{u-\tau}\ast p^{\infty}_\tau)(w_u)\ d\tau \ du  ] \\
    & -  \E_{\P^\infty}[\phi(\dots)\int_s^t \frac{\partial f}{\partial 
    x}(w_u) \int_0^u (K_{u-\tau}\ast p^\infty_\tau)(w_u)\ d\tau \ du  ]\big) 
    \\
    &=: I  + II.
\end{align*}
Now, in view of \eqref{1stTimeHorizonBoundDens} and the definition of $D(T)$ as in \eqref{def:D(T)}, one has
\begin{align*}
& |I|\leq \|\phi\|_{L^\infty(\R)} \int_s^t  \int _0^u \int  
|\frac{\partial f}{\partial x}(x)| |(K_{u-\tau}\ast(\bar{p}^{k-1}_\tau- 
p^\infty_\tau))(x)|p^k_u(x)dx \  d\tau \ du \\
&\leq  \|\phi\|_{L^\infty(\R)} \|\frac{\partial f}{\partial x}\|_{L^\infty(\R)} \int_s^t \frac{C_{T_0}}{\sqrt{u}}\int_0^u \|K_{u-\tau}\|_{L^1(\R)}\|\bar{p}^{k-1}_\tau- 
p^\infty_\tau\|_{L^1(\R)}d\tau \ du\\
&\leq C_{T_0} D(T_0) \|\phi\|_{L^\infty(\R)} \|\frac{\partial f}{\partial x}\|_{L^\infty(\R)}  \sup_{r\leq T_0} \|\bar{p}^{k-1}_r- 
p^\infty_r\|_{L^1(\R)}.
\end{align*}
Proposition \ref{iteratesRes} implies that  $I \to 0$ as $k \to \infty$.
Now, to prove that $II\to 0$, it suffices to prove that the functional 
$F: \mathcal{C}([0,T_0];\R)\to \R $ defined by
$$w_. \mapsto \phi(w_{t_1},\dots ,w_{t_N}) \int_s^t  \frac{\partial f}{\partial x}(w_u) \int_0^u \int K_{u-\tau}(w_u-y)p^{\infty}_\tau(y)\ dy\ d\tau \ du $$
is continuous. Let $(w^n)$ a sequence converging in 
$\mathcal{C}([0,T_0];\R)$ to 
$w$. Since $\phi$ is a continuous function, it suffices to show that
\begin{align}
\label{contFunctional1}
    & \lim_{n\to \infty}\int_s^t  \frac{\partial f}{\partial x}(w^n_u) \int_0^u \int K_{u-\tau}(w^n_u-y)p^{\infty}_\tau(y)\ dy\ d\tau \ du\\ \nonumber
    & = \int_s^t  \frac{\partial f}{\partial x}(w_u) \int_0^u \int K_{u-\tau}(w_u-y)p^{\infty}_\tau(y)\ dy\ d\tau \ du .
\end{align}
For $(u,\tau) \in [s,t] \times [0,t]$, set
$$h_{u,\tau}(x):= \mathbbm{1}\{\tau< u\}\frac{\partial f}{\partial x}(x_u) \int K_{u-\tau}(x-y)p^{\infty}_\tau(y)dy. $$ 
The hypothesis (H-2) implies the continuity of $h_{u,\tau}$ on $\R$.
Furthermore, 
$$|h_{u,\tau}(x) |\leq C \mathbbm{1}\{\tau< u\}\|p^\infty_\tau\|_{L^\infty(\R)} 
\|K_{u-\tau}\|_{L^1(\R)}\leq \frac{C}{{\sqrt{\tau}}} \mathbbm{1}\{\tau< u\} 
\|K_{u-\tau}\|_{L^1(\R)}.$$
In view of (H-4), we apply the theorem of dominated convergence to conclude \eqref{contFunctional1}.
This ends the proof.
\end{enumerate}
\end{proof}
\paragraph{Proof of Theorem \ref{localEX}\\}
Proposition \ref{solMP} implies the existence of a weak solution 
$(\Omega, \F, \P, (\F_t), X, W )$   to \eqref{NLSDEsimple} up to time 
$T_0$. Thus, the marginals  $\P \circ X_t^{-1}=:p_t$ satisfy 
$\|p_t\|_{L^\infty(\R)}\leq \frac{C}{\sqrt{t}}$, $t \in (0,T_0]$.
In addition, as $|B(t,x;p)|\leq C(T_0)$, one has that $(\Omega, \F, \P, (\F_t), X, W )$ is the unique weak solution of the linear SDE
\begin{equation}
\label{LinearSDE}
d\tilde{X}_t= b(t,\tilde{X}_t)dt+ B(t,\tilde{X}_t; p)dt + dW_t, \quad t\leq T_0.
\end{equation}

Now suppose that there exists another weak solution $(\hat{\Omega}, \hat{\F}, \hat{\P}, (\hat{\F}_t), \hat{X}, \hat{W} )$ to \eqref{NLSDEsimple} up to $T_0$ and denote $\hat{\P}\circ \hat{X}_t^{-1}(dx)=\hat{p}_t(x)dx$. 
By Proposition \ref{MeqUniq} we have $\hat{p}_t= p_t$, for $t\leq T_0$. Therefore,  $(\hat{\Omega}, \hat{\F}, \hat{\P}, (\hat{\F}_t), \hat{X}, \hat{W} )$ is a weak solution to \eqref{LinearSDE}, from which $\hat{\P}\circ \hat{X}^{-1}= \P \circ X^{-1}$.

\section{Proofs of Theorem~\ref{mainTh} and Corollary \ref{cor:forPS}: A global existence and 
uniqueness result for Equation \eqref{NLSDEsimple}}

We now construct a solution for an arbitrary time horizon $T>0$. We 
will do it by restarting the equation after the already fixed $T_0$. We 
start with $T=2 T_0$. Then, we will see how to generalize this 
procedure for an arbitrary $T>0$. 

Throughout this section, we denote by $\Omega_0$ the canonical space 
$\mathcal{C}([0,T_0];\R)$ and by $\B_0$ its Borel $\sigma$- field.
We denote by $\Q^1$ the probability distribution of 
the unique weak solution to~\eqref{NLSDEsimple} up to $T_0$ constructed 
in the previous section.

\subsection{Solution on $[0,2T_0]$}
\begin{proposition}
\label{localEx2T0}
Let $T_0>0$ be such that $D(T_0)<1$. Assume $p_0 \in L^1(\R)$ and let 
$b\in L^\infty([0,2T_0]\times \R)$ be continuous w.r.t. the space 
variable. Under the hypothesis $(\text{H})$, Equation 
\eqref{NLSDEsimple} admits a unique weak solution up to $2T_0$.
\end{proposition}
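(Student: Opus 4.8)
The plan is to bootstrap the local result (Theorem~\ref{localEX}) by restarting the equation at time $T_0$. First I would use Theorem~\ref{localEX} to obtain the unique weak solution $\P^1$ on $C([0,T_0];\R)$, with time marginal densities $(p_t)_{t\le T_0}$ satisfying both \eqref{density_space} and \eqref{density_infty}; in particular $p_{T_0}\in L^1\cap L^\infty(\R)$ is a genuine probability density. The key observation is that, for $t\in[T_0,2T_0]$, the memory term $\int_0^t (K_{t-s}\star p_s)(X_t)\,ds$ splits as $\int_0^{T_0}(K_{t-s}\star p_s)(X_t)\,ds + \int_{T_0}^t(K_{t-s}\star p_s)(X_t)\,ds$. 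The first piece involves only the \emph{already known} densities $p_s$, $s\le T_0$, so it is an explicit (if singular) time-dependent function of $X_t$; using (H.6) together with the estimate \eqref{density_infty} on $[0,T_0]$, one checks that $\tilde b_0(t,x):=b(t,x)+\int_0^{T_0}(K_{t-s}\star p_s)(x)\,ds$ is bounded on $[T_0,2T_0]\times\R$ and continuous in $x$. Thus on the time interval $[T_0,2T_0]$ Equation~\eqref{NLSDEsimple} becomes a McKean--Vlasov equation of exactly the same structural form as \eqref{NLSDEsimple}, but with: new initial time $T_0$, new (bounded, $x$-continuous) ``base drift'' $\tilde b_0$, new initial law $p_{T_0}(y)\,dy$, and new memory kernel $K$ acting only over $[T_0,t]$ — whose total mass $\int_{T_0}^{2T_0}\!\int_\R |K_{t-T_0}(x)|\,dx\,dt = D(T_0)<1$.

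Next I would re-run the entire machinery of Section~\ref{sec:local} verbatim on $[T_0,2T_0]$ with these new data: the iterative scheme \eqref{iter1}--\eqref{iterk} (started from $p_{T_0}$), Proposition~\ref{iteratesRes} to get uniform $L^\infty$ density bounds $\|p^k_t\|_\infty\le C/\sqrt{t-T_0}$ and uniform drift bounds $\beta^k\le C$ (here $D(T_0)<1$ is again what makes the induction close), Proposition~\ref{solMP} for tightness and identification of any limit point as a solution of the associated nonlinear martingale problem, and finally Proposition~\ref{MeqUniq} (the mild-equation uniqueness, which only needs the $L^2$ bound \eqref{density_space} and Hypothesis~(H)) to pin down the marginals uniquely and hence, via the linear SDE argument with frozen drift $\tilde b_0(t,x)+\int_{T_0}^t(K_{t-s}\star p^\infty_s)(x)\,ds$, the law on $C([T_0,2T_0];\R)$. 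Concatenating this solution with $\P^1$ along the common marginal at $T_0$ (using that both pieces are characterized by martingale problems and that $p_{T_0}$ is an honest density) yields a weak solution on $[0,2T_0]$ whose marginals satisfy \eqref{density_infty} throughout. For uniqueness on $[0,2T_0]$: any such solution restricted to $[0,T_0]$ must agree with $\P^1$ by the uniqueness part of Theorem~\ref{localEX}; then its restriction to $[T_0,2T_0]$ solves the restarted problem above, whose solution is unique by the same argument.

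The main technical obstacle I expect is the restart step itself — specifically, verifying that the ``frozen past'' contribution $\int_0^{T_0}(K_{t-s}\star p_s)(x)\,ds$ is a legitimate bounded, $x$-continuous drift on $[T_0,2T_0]$, and more importantly that Hypothesis~(H) is stable under this restarting, i.e. that the shifted kernel still satisfies all six conditions of (H) on $[0,T_0]$ with uniform constants. Conditions (H.1)--(H.4) are essentially immediate since they are local-in-time near $0$ and the shift only removes a bounded-away-from-singularity part; the delicate ones are (H.5) — which controls $\sup_{t,x}\int\phi(y)\|K_\cdot(x-y)\|_{L^1(0,t)}\,dy$ and is needed for the martingale-problem integrability and for bounding $B$ — and (H.6), which is precisely the condition engineered to make the ``past'' drift $\sup_{0\le t\le T_0}\int_0^{T_0}\|K_{T_0+t-s}\|_{L^1}s^{-1/2}\,ds$ bounded; this is why (H.6) was built into the hypothesis in the first place. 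Once (H) is seen to propagate, everything else is a faithful repetition of Sections~\ref{sec:local}, and the induction $2T_0\rightsquigarrow 3T_0\rightsquigarrow\cdots\rightsquigarrow nT_0\ge T$ extends the solution to any horizon $T$.
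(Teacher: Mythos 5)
Your plan is correct and follows essentially the same route as the paper: split the memory term at $T_0$, absorb the past contribution $\int_0^{T_0}(K_{T_0+t-s}\star p^1_s)(x)\,ds$ into a bounded, $x$-continuous drift via (H.6) and the $L^\infty$ density bound, apply the local Theorem~\ref{localEX} to the restarted equation started from $p^1_{T_0}$, paste the two laws at $T_0$, and get uniqueness from the mild-equation argument of Section~\ref{subs:uniq}. The only place the paper spends substantially more effort than your sketch is the pasting step, done there via regular conditional probabilities $\P^2_y$ and an explicit verification of the martingale problem in the three regimes $s\le t\le T_0$, $s\le T_0\le t$, $T_0\le s\le t$ (and note that no ``shifted kernel'' version of (H) is needed, since the restarted equation uses the same kernel $K$ on a horizon of length $T_0$).
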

\noindent We start with analyzing the dynamics of \eqref{NLSDEsimple} 
after $T_0$ and informally explaining the construction of a solution 
between $T_0$ and $2T_0$. Assume, for a while, that Proposition 
\ref{localEx2T0} holds true. Denote the density of $X_t$ by $p^1_t$, 
for $t\leq T_0$ and by $p^2_t$, for $t\in (T_0,2T_0]$. 
 Let $t \geq 0$. In view of Equation \eqref{NLSDEsimple}, we would have
 $$X_{T_0+t}= X_{T_0}+\int_{T_0}^{T_0+t} b(s,X_s)ds + 
 \int_{T_0}^{T_0+t} \int_0^s (K_{s-\theta} \ast p_\theta)(X_s)d\theta 
 ds + W_{T_0+t}-W_{T_0}.$$
 Observe that
 \begin{align*}
     & \int_{T_0}^{T_0+t} \int_0^s (K_{s-\theta} \ast 
     p_\theta)(X_s)d\theta ds =  \int_{T_0}^{T_0+t} \int_0^{T_0} 
     (K_{s-\theta} \ast p^1_\theta)(X_s)d\theta ds \ + 
     \int_{T_0}^{T_0+t} \int_{T_0}^s (K_{s-\theta} \ast 
     p^2_\theta)(X_s)dsdt\\
     &=: B_1 + B_2.
 \end{align*}
In addition,
 $$B_1= \int_0^{t}  \int_0^{T_0} (K_{T_0+s'-\theta} \ast 
 p^1_\theta)(X_{T_0+s'})d\theta ds', $$
and
$$B_2=\int_{0}^{t} \int_{T_0}^{T_0+s'} (K_{T_0+s'-\theta} \ast 
p^2_\theta)(X_{T_0+s'})d\theta ds'=\int_{0}^{t} \int_{0}^{s'} 
(K_{s'-\theta'} \ast p^2_{T_0+\theta'})(X_{T_0+s'}) d\theta' ds'.$$
Now set $Y_t:= X_{T_0 +t}$ and $\tilde{p}_t(y):= p^2_{T_0 + t}(y)$. 
Consider the new Brownian motion $\overline{W}_t:= W_{T_0+t}-W_{T_0}$. 
It comes:
\begin{equation*}
    Y_t =Y_0 + \int_0^tb(s+T_0,Y_s)ds+ \int_0^{t}  \int_0^{T_0} 
    (K_{T_0+s'-\theta} \ast p^1_\theta)(Y_{s})d\theta ds + 
    \int_{0}^{t} \int_{0}^s (K_{s'-\theta'} \ast 
    \tilde{p}_\theta)(Y_{s})d\theta ds + \overline{W}_t,
\end{equation*}
for $t\in[0,T_0]$. Setting
$$b_1(t,x,T_0):= \int_0^{T_0} (K_{T_0+t-s}\ast p^1_s)(x)ds \quad \text{ 
and } \quad \tilde{b}(t,x):=b(T_0+t,x),$$
we have
\begin{equation}
\label{RestartedEquation}
\begin{cases}
& d Y_t=\tilde{b}(t,Y_t)dt+ b_1(t,Y_t,T_0)dt + \Big\{ \int_{0}^t 
(K_{t-s}\ast \tilde{p}_s)(Y_t) ds \Big \}dt + d\overline{W}_t, \ t\leq 
T_0, \\
& Y_0\sim p^1_{T_0}(y) dy, \  Y_s \sim \tilde{p}_s(y)dy .
\end{cases}
\end{equation}

To prove Proposition \ref{localEx2T0} we construct a weak 
solution to \eqref{RestartedEquation} on $[0,T_0]$ and suitably paste 
its probability distribution with $\Q^1$. We then prove that 
the so defined measure solves the desired non-linear martingale 
problem. Notice that the SDE~\eqref{RestartedEquation} is of 
the same type as \eqref{NLSDEsimple}.

\begin{lemma}
Same assumptions as in Proposition \ref{localEx2T0}. Denote by $p^1_t$ 
the time marginals of $\Q^1$. Set $b_1(t,x,T_0):= \int_0^{T_0} 
(K_{T_0+t-s}\ast p^1_s)(x)ds$ and $\tilde{b}(t,x):=b(T_0+t,x)$. Then, 
Equation \eqref{RestartedEquation} admits a unique weak solution up to 
$T_0$.
\end{lemma}

\begin{proof}
Let us check that we may apply Theorem \ref{localEX} to 
\eqref{RestartedEquation}.

Firstly, by construction the initial law $p^1_{T_0}$ of $Y$ 
satisfies  the assumption of Theorem \ref{localEX}. 
Secondly, the role of the additional drift $b$ is now played by the sum 
of 
the two linear drifts, $\tilde{b}$ and $b_1$. By hypothesis, 
$\tilde{b}$ is 
bounded in $[0,T_0]\times \R$ and continuous in the space variable. 
Using \eqref{density_infty} and (H-6) we conclude that $b_1$ is bounded 
uniformly in $t$ and $x$ since
$$ |b_1(t,x,T_0)|\leq C_{T_0} 
\int_0^{T_0}\frac{\|K_{T_0+t-s}\|_{L^1(\R)}}{\sqrt{s}} ds
< C_{T_0}. $$
To show that $b_1(t,x,T_0)$ is continuous w.r.t.~$x$ we use~(H-2)
and proceed as at the end of the proof of Proposition~\ref{solMP}. 

We now are in a position to apply Theorem \ref{localEX}: There exists a 
unique weak solution 
 to \eqref{RestartedEquation} up to $T_0$.
\end{proof}
Denote by $\Q^2$ the probability distribution of the 
process~$(Y_t,t\leq T_0)$. Notice that $\Q^2$ is the solution to the 
martingale problem $(MP(p^1_{T_0},T_0,\tilde{b} + b_1))$.
\paragraph{A new measure on $\mathcal{C}([0,2T_0];\R)$.}
Let $\Q^1$, $\Q^2$ and $(p^1_t)$ be as above. Let $(p^2_t)$
denote the time marginal densities of $\Q^2$.
In particular, $\Q^2_0=\Q^1_{T_0}$, i.e. $p^2_0(z)dz=p^1_{T_0}(z)dz$. 
Define the mapping $X^0$ from $\Omega_0$ 
to $\R$ as $X^0(w) := w_0$. 
Using~\cite[Thm.3.19, Chap.5]{KaratzasShreve} to justify the 
introduction of regular conditional probabilities, for each $y\in \R$ 
we define the probability measure 
$\Q^2_y$ on $(\Omega_0, \B_0)$ by 
$$ \forall A\in \B_0,~~\Q^2_y(A)= \P^2(A|X^0=y).$$
In particular,
$$\Q^2_y(w\in \Omega_0, w_0=y)=1.$$

We now set $\Omega:=\mathcal{C}([0,2T_0];\R)$.
For $w^1_\cdot, w^2_\cdot \in \Omega_0$ we define the concatenation $w= 
w^1\otimes_{T_0} w^2 \in \Omega$ of these two paths as the function
in $\Omega$ defined by
\begin{equation*}
    \begin{cases}
    & w_\theta = w^1_\theta,  \quad \quad \ 0\leq \theta\leq T_0,\\
    & w_{\theta +T_0} = w^1_{T_0} + w^2_{\theta} - w^2_{0}, \quad 0\leq \theta\leq t-T_0.\\
    \end{cases}
\end{equation*}
On the other hand, for a given path $w\in \Omega$, the two paths $w^1_\cdot, w^2_\cdot \in \Omega_0$ such that $w= w^1\otimes_{T_0} w^2$ are
\begin{equation*}
    \begin{cases}
& w^1_\theta= w_\theta, \quad \quad \ 0\leq \theta\leq T_0, \\
& w^2_{\theta}= w_{T_0+\theta}, \quad 0\leq \theta\leq T_0.\\
\end{cases}
\end{equation*}
We define the probability distribution $\Q$ on 
$\Omega$ equipped with its Borel $\sigma$--field as 
follows. For any continuous and bounded functional $\varphi$ on 
$\Omega$,
\begin{equation}
\label{QpartP2}
\E_{\Q}[\varphi]= \int_\Omega \varphi(w)\Q(dw):= \int 
_{\Omega_0}\int_\R \int _{\Omega_0} \varphi (w^1\otimes_{T_0} w^2 
)\Q^2_y(dw^2) p^1_{T_0}(y)dy \Q^1(dw^1).
\end{equation}
Notice that if $\varphi$ acts only on the part of the path up to $t\leq 
T_0$ of any $w_{\cdot} \in \Omega$, then
\begin{equation}
\label{QpartP1}
\E_{\Q}[\varphi((w_\theta)_{\theta\leq t})]= \int _{\Omega_0} 
\varphi((w_\theta)_{\theta\leq t}) \Q^1(dx)= 
\E_{\Q^1}[\varphi((w_\theta)_{\theta\leq t})].
\end{equation}
\paragraph{Proof of Proposition \ref{localEx2T0}.}
Let us prove that the probability measure $\Q$ solves the non--linear 
martingale problem $(MP(p_0,2T_0,b))$ on the canonical space 
$\mathcal{C}([0,2T_0];\R)$. 
\begin{enumerate}[i)]
\item By \eqref{QpartP1}, it is obvious that $\Q_0= \Q^1_0$. By 
construction,  $\Q_0^1$ has density $p_0$.
\item Next, let us characterize the one dimensional time marginals of 
$\Q$. For $f\in C_b(\R)$ and $t\in [0,2T_0]$, consider the functional 
$\varphi(w)= f(w_t)$, for any $x \in \mathcal{C}([0,2T_0];\R)$. 
For $t\leq T_0$, by \eqref{QpartP1},
    $$\E_\Q[\varphi(w)]= \int_{\Omega_0} f(w_t) \Q^1(dx)= \int_\R 
    f(z)p_t^1(z)dz.$$
Therefore, $\Q_t(dz)= p_t^1(z)dz.$

For $T_0\leq t\leq 2 T_0$, by \eqref{QpartP2},
    $$\E_\Q[\varphi(w)]= \int_{\Omega_0} \int_\R \int_{\Omega_0} 
    f(w^2_{t-T_0}) \Q^2_y(dw^2)p^2_{T_0}(y)dy \Q^1(dw^1)= \int_\R 
    \int_\R f(z)\Q^2_{y,t-T_0}(dz) p^1_{T_0}(y)dy.$$
By Fubini's theorem:
    $$\E_\Q[\varphi(w)]=\int_\R f(z) \int_\R \Q^2_{y,t-T_0}(dz) 
    p^1_{T_0}(y)dy.$$
Since  $\Q^2_0= p^1_{T_0}$ we deduce
    $$\E_\Q[\varphi(w)]=\int_\R f(z) p^2_{t-T_0}(z)dz,$$
which shows that $\Q_t(dz)= p_{t-T_0}^2(z)dz.$
   
Therefore, the one dimensional marginals of $\Q$ have densities $q_t$ which, by construction, satisfy $\|q_t\|_{L^\infty(\R)}\leq \frac{C}{\sqrt{t}}$.
\item It remains to show that $(M_t)_{t\leq 2T_0}$ defined as 
$$M_t:=f(w_t)-f(w_0)-\int_0^t \big[\frac{1}{2} 
\frac{\partial^2f}{\partial x^2}(w_u)+\frac{\partial f}{\partial 
x}(w_u)(b(u,w_u)+ \int_0^u \int K_{u- \tau}(w_u-y) 
q_\tau(y)dyd\tau\big)]du $$
is a $\Q$-martingale, i.e. $\E_\Q(M_t|\B_s)=M_s$.
\begin{enumerate}
\item Let $s \leq t\leq T_0:$\\
For any $n\in N$, any continuous bounded functional $\phi$ on $\R^n$, and any ${t_1}\leq \dots \leq {t_n}<s\leq t\leq T_0$, by \eqref{QpartP1}:
$$\E_\Q(\phi(w_{t_1}, \dots, w_{t_n}) (M_t-M_s))= 
\E_{\Q^1}(\phi(w_{t_1}, \dots, w_{t_n}) (M_t-M_s)).$$
As $\Q^1$ solves the $(MP(p_0,T_0,b))$ up to $T_0$,
$$\E_\Q(\phi(w_{t_1}, \dots, w_{t_n}) (M_t-M_s))=0.$$
\item For $s\leq T_0\leq t\leq2T_0$,\\
$$\E_\Q(M_t|\B_s)=\E_\Q[\E_\Q(M_t|\B_{T_0})|\B_s].$$
Let us prove that $\E_\Q(M_t|\B_{T_0})=M_{T_0}$. Notice that
\begin{align*}
   & M_t-M_{T_0} = f(w_t)- f(w_{T_0})- \int_{T_0}^t 
   \frac{1}{2}\frac{\partial^2f}{\partial x^2}(w_u)du - 
   \int_{T_0}^t\frac{\partial f}{\partial x}(w_u)b(u,w_u)du \\
   & -\int_{T_0}^t\frac{\partial f}{\partial x}(w_u) \int_0^u \int 
   K_{u-\tau}(w_u-y)q_\tau(y)dyd\tau du.\\
\end{align*}
Write the last integral as
\begin{align*}
    & \int_{T_0}^t\frac{\partial f}{\partial x}(w_u) \int_0^u \int 
    K_{u-\tau}(w_u-y)q_\tau(y)dyd\tau du= \int_{T_0}^t\frac{\partial 
    f}{\partial x}(w_u) \int_0^{T_0}\int 
    K_{u-\tau}(w_u-y)p^1_\tau(y)dyd\tau du\\
    & + \int_{T_0}^t\frac{\partial f}{\partial x}(w_u) 
    \int_{T_0}^u\int K_{u-\tau}(w_u-y)p^2_{\tau-T_0}(y)dyd\tau du\\
    & =: I_1+I_2.\\
\end{align*}
Now,
    $$I_1= \int_{0}^{t-T_0}\frac{\partial f}{\partial x}(w_{u+T_0}) 
    \int_0^{T_0}\int K_{u+T_0- \tau}(w_{u+T_0}-y) p^1_\tau(y)dyd\tau 
    du.$$
    For $w\in \Omega$ identify $w^1,w^2 \in \Omega_0$ such that 
    $w=w^1\otimes_{T_0} w^2$. Then,
    $$I_1= \int_{0}^{t-T_0}\frac{\partial f}{\partial x}(w^2_u) 
    \int_0^{T_0} (K_{u+T_0-\tau} \ast p_\tau^1)(w^2_u)d\tau du= 
    \int_{0}^{t-T_0}\frac{\partial f}{\partial x}(w^2_u) b_1(u,w^2_u, 
    T_0) du.$$
   Proceeding as above,
   \begin{align*}
   &I_2= \int_{0}^{t-T_0}\frac{\partial f}{\partial x}(w_{u+T_0}) 
   \int_0^u  \int K_{u- \tau}(w_{u+T_0}-y) p^2_\tau(y)dyd\tau du\\
   & = \int_{0}^{t-T_0}\frac{\partial f}{\partial x}(w^2_u) \int_0^u 
   \int  K_{u- \tau}(w^2_u-y)p^2_\tau(y)dyd\tau du.
   \end{align*}
   Similarly
   \begin{align*}
   & \int_{T_0}^t\frac{\partial f}{\partial 
   x}(w_u)b(u,w_u)du=\int_0^{t-T_0}\frac{\partial f}{\partial 
   x}(w_{u+T_0})b(u+T_0,w_{u+T_0})du\\
   & = \int_0^{t-T_0}\frac{\partial f}{\partial 
   x}(w^2_u)b(u+T_0,w^2_u)du=\int_0^{t-T_0}\frac{\partial f}{\partial 
   x}(w^2_u)\tilde{b}(u,w^2_u)du . 
   \end{align*}

   
It comes:
    \begin{align*}
        & M_t-M_{T_0}= f(w^2_{t-T_0})- f(w^2_0)- \int_0^{t-T_0} 
        \frac{1}{2}\frac{\partial^2f}{\partial 
        x^2}(w^2_u)du-\int_0^{t-T_0}\frac{\partial f}{\partial 
        x}(w^2_u)\tilde{b}(u,w^2_u)du\\
        & - \int_{0}^{t-T_0}\frac{\partial f}{\partial 
        x}(w^2_u)\big[b_1(u,w^2_u, T_0)+\int_0^u  \int K_{u- 
        \tau}(w^2_u-y)p^2_\tau(y)dyd\tau\big]du=: F(w^2).\\
    \end{align*}
    By definition of the measure $\Q$, 
    \begin{equation*}
   \E^\Q(\phi(w_{t_1}, \dots w_{t_n})(M_t-M_{T_0}))= 
   \int_{\Omega_0} 
   \phi(w^1_{t_1}, \dots, w^1_{t_n})\int_\R \int_{\Omega_0}  F (w^2) 
   \Q^2_y(dw^2) p^1_{T_0}(y)dy \Q^1(dw^1).
    \end{equation*}
    By the definition of $\Q^2$:
    $$\E^\Q(\phi(w_{t_1}, \dots w_{t_n})(M_t-M_{T_0}))= 
    \int_{\Omega_0} 
    \phi(w^1_{t_1}, \dots, w^1_{t_n})\int_{\Omega_0}  F (w^2) 
    \Q^2(dw^2) \Q^1(dw^1).$$
   As $\Q^2$ solves $(MP(p^1_{T_0},T_0,\tilde{b} + b_1))$, one has
    $$\E_{\Q^2}(\varphi)= \int_{\Omega_0}F (w^2)\Q^2(dw^2)= 0.$$
    Finally, we conclude that $\E_\Q(M_t|B_{T_0})=M_{T_0}$ and 
    therefore $\E_\Q(M_t|\B_s)=M_s$ for all $s\leq T_0 \leq t \leq 
    2T_0$.
    \item For $T_0\leq s\leq t\leq 2 T_0:$ we may rewrite the difference $M_t-M_s$ in the same manner:
    \begin{align*}
        & M_t-M_{s}= f(w^2_{t-T_0})- f(w^2_{s-T_0})- 
        \int_{s-T_0}^{t-T_0} 
        \frac{1}{2}\frac{\partial^2f}{\partial x^2}(w^2_u)du\\
        & - \int_{s-T_0}^{t-T_0}\frac{\partial f}{\partial 
        x}(w^2_u)[b(u,w^2_u))+b_1(u,w^2_u, T_0)+\int_0^u  \int 
        K_{u-\tau}(w_{u+T_0}-y)p^2_\tau(y)dyd\tau]du\\
        &=:F (w^2).
       \end{align*}
       Now, take $t_1 \leq \dots \leq t_n < s$. Let us suppose that the first $m$ are before $T_0$ and others after. We have that \begin{align*}
        &\E_\Q(\phi(w_{t_1}, \dots w_{t_n})(M_t-M_{s}))=\\
        &\int_{\Omega_0}\int_\R \int_{\Omega_0}  F(w^1_{t_1}, \dots, 
        w^1_{t_m}, w^2_{t_{m+1}-T_0}, \dots,w^2_{t_{n}-T_0} ) \varphi 
        (w^2) d\Q^2_y(dw^2)p^1_{T_0}(y)dy \Q^1(dw^1).\\
        \end{align*}
        Since $\Q^2$ solves the $(MP(p^1_{T_0},T_0,\tilde{b} + b_1))$, 
        one has that $\E_{\Q^2}(\phi'(w^2_{t_1'}, \dots w^2_{t_n'}) 
        F)=0$ for any continuous bounded functional $\phi'$ on $\R^n$, 
        any $n\in N$ and any ${t_1'}\leq \dots \leq {t_n'}<s-T_0.$ 
        Taking $\phi'(w^2_{t_1'}, \dots w^2_{t_n'})=  \phi(w^1_{t_1}, 
        \dots, w^1_{t_m}, w^2_{t_{m+1}-T_0}, \dots,w^2_{t_{n}-T_0} )$ 
        for a fixed $x^1$, we conclude that
         $$\int_{\R} \int_{\Omega_0} \phi(w^1_{t_1}, \dots, w^1_{t_m}, 
         w^2_{t_{m+1}-T_0}, \dots,w^2_{t_{n}-T_0} ) \varphi 
         (w^2)\Q_y^2(dw^2)p^1_{T_0}(y)dy=0.$$
        Therefore,
   $$\E^\Q(\phi(w_{t_1}, \dots w_{t_n})(M_t-M_{s}))=0.$$
   Thus, $\E^\Q(M_t|\B_{s})=M_{s}$ for $T_0\leq s\leq t\leq 2 T_0$.
  \end{enumerate}
\end{enumerate}
To summarize the preceding, we have just shown the existence of a solution to $(MP(p_0,2T_0,b))$. Finally, we proceed as in the proof of Theorem \ref{localEX} to deduce the existence and uniqueness of a weak solution to \eqref{NLSDEsimple} up to $2T_0$.

\subsection{End of the proof of Theorem \ref{mainTh}: construction of 
the global solution}
Given any finite time horizon $T>0$, split the interval~$[0,T]$ into 
$n=[\frac{T}{T_0}]+1$ intervals of length not exceeding $T_0$ and repeat $n$ times 
the procedure used in the preceding subsection. By construction, the 
time 
marginals of this solution to $(MP(p_0,T,b))$ has probability densities 
which satisfy~\eqref{density_space}.
\begin{remark}
Using similar arguments as above one can construct a 
solution to \eqref{NLSDEsimple} when the initial condition $p_0$ is in 
$L^\infty(\R) \cap L^1(\R)$ or, respectively, $L^2(\R) \cap L^1(\R)$. 
In these cases we use Remark~\ref{rk:regularity-result} in the 
iterative procedure. Consequently, the weak solution is 
unique under the constraint 
that the one dimensional marginal densities 
$(p_t)_{t\leq T}$ belong to $L^\infty((0,T);L^\infty(\R) \cap L^1(\R))$
or, respectively, satisfy
$$\|p_t\|_{L^\infty(\R)}\leq \frac{C_T}{t^{1/4}}.$$
\end{remark}
\subsection{Proof of Corollary \ref{cor:forPS}}
As \eqref{density_infty} implies \eqref{density_space}, Theorem \eqref{mainTh} implies the existence of a solution to \eqref{NLSDEsimple} in the modified sense.

Let $(\Omega,\F,\P,(\F_t),X,W)$ be any solution to \eqref{NLSDEsimple} in this new sense. Denote by $(p_t)_{t\leq T}$ the time marginal densities of $\P \circ X^{-1}$. Then, \eqref{density_space} and (H-7) imply that $B(t,x;p)$ is a bounded function on $[0,T]\times \R$. In view of Corollary \ref{prel:densityEST}, $(p_t)_{t\leq T}$ satisfies \eqref{density_infty} and as such it is a solution to \eqref{NLSDEsimple} in the sense of Definition \ref{def:weakSDE}. Theorem \ref{mainTh} implies the uniqueness of this solution in the modified sense.
\section{Application to the one-dimensional Keller--Segel model}
\label{sec:appl-K-S}
In this section we prove Corollary~\ref{KSmain}. We 
start with checking that $K^\sharp$ satisfies Hypothesis~(H).
The condition (H-1) is satisfied since for $t>0$ one has
$$ \|K^\sharp_t\|_{L^1(\R)} = \frac{C}{\sqrt{t}}\int 
|z|e^{-\frac{z^2}{2}}dz .$$
From the definition of $K^\sharp$ it is clear that for $t>0$, 
$K^\sharp_t$ is a bounded and continuous function on $\R$.
The condition (H-3) is also obviously satisfied.
As already noticed,
$$\|K^\sharp_{t-s}\|_{L^1(\R)}=\frac{C}{\sqrt{t-s}}, $$
from which, 
$$f_1(t) := \int_0^t \frac{\|K^\sharp_{t-s}\|_{L^1(\R)}}{\sqrt{s}}ds = 
C\int_0^t\frac{1}{\sqrt{s}\sqrt{t-s}}ds = C\int_0^1 \frac{1}{\sqrt{x}\sqrt{1-x}} dx=C, $$
where $C$ is a universal constant.
Now let $\phi$ be a probability density  on $\R$. For $(t,x)\in 
(0,T]\times \R$, one has 
\begin{equation*}
\begin{split}
\int \phi(y) \|K^\sharp_\cdot(x-y)\|_{L^1(0,t)}~dy &\leq C \int 
\phi(y)|x-y| \int_0^t \frac{1}{s^{3/2}}e^{-\frac{|x-y|^2}{2s}}ds~dy\\
&= \int \phi(y) |x-y|\int_{\frac{|x-y|}{\sqrt{t}}}^\infty 
\frac{z^3}{|x-y|^3}e^{-\frac{z^2}{2}}\frac{|x-y|^2}{z^3}dz~dy \\
&= \int \phi(y) \int_{\frac{|x-y|}{\sqrt{t}}}e^{-\frac{z^2}{2}}dz~dy.
\end{split}
\end{equation*}
This shows that (H-5) is satisfied.
Finally, to prove (H-6) we notice that for every $t\in [0,T]$ 
$$\int_0^T \| K^\sharp_{T+t-s} 
\|_{L^1(\R)}~\frac{1}{\sqrt{s}}~ds \leq \int_0^T \frac{C}{\sqrt{T+t-s} 
\sqrt{s}} ds \leq  C \int_0^T \frac{1}{\sqrt{T-s} 
\sqrt{s}} ds=C.$$
Therefore,
in view 
of Theorem \ref{mainTh}, Equation \eqref{NLSDEd} with $d=1$ is 
well-posed.\footnote{With similar calculations as for $f_1$, one easily checks that the function $f_2$ is 
bounded on any compact time interval. Thus, Corollary \ref{cor:forPS} applies as well as Theorem \ref{mainTh}.} 

Denote by $\rho(t,x)\equiv p_t(x)$ the time marginals of the constructed probability distribution. Now, define the function $c$ as in~(\ref{eq:c-KS}).
In view of Inequality~\eqref{density_infty}, for any $t\in (0,T]$
the function $c(t,\cdot)$ is well defined and bounded continuous.
Let us show that $c \in L^\infty([0,T];C_b^1(\R))$. 

We have
$$\frac{\partial c}{\partial x}(t,x)= \frac{\partial}{\partial x} 
\left(e^{-\lambda t}\E (c_0(x+W_t)\right)+ \frac{\partial}{\partial x} 
\left( \E \int_0^t e^{-\lambda s} \rho(t-s, x+W_s)ds\right).$$
Then observe that
\begin{equation*}
\begin{split}
\frac{\partial}{\partial x}\E \int_0^t e^{-\lambda s} 
\rho(t-s,x+W_s)ds &= \frac{\partial}{\partial x}\int_0^t e^{-\lambda 
s} 
\int \rho(t-s,x+y)\frac{1}{\sqrt{2\pi s}}e^{\frac{-y^2}{4s}}dy ds\\
&= \frac{\partial}{\partial x}\int_0^t e^{-\lambda (t-s)} \int 
\rho(s,y)\frac{1}{\sqrt{2\pi (t-s)}}e^{\frac{-(y-x)^2}{4(t-s)}} 
dyds \\
&=:\frac{\partial}{\partial x}\int_0^t f(s,x) ds.
\end{split}
\end{equation*}
As for any $0<s<t$
$$|\frac{\partial}{\partial x}\frac{1}{\sqrt{ 
t-s}}e^{\frac{-(y-x)^2}{2(t-s)}} |\leq 
\frac{|y-x|}{2(t-s)^{3/2}}e^{\frac{-(y-x)^2}{2(t-s)}}\leq 
\frac{C}{t-s},$$
we have
$$\frac{\partial f}{\partial x}(s,x)= e^{-\lambda (t-s)} \int 
\rho(s,y)\frac{y-x}{2\sqrt{2\pi}(t-s)^{3/2}}e^{\frac{-(y-x)^2}{2(t-s)}}
 dy.$$
Now, we repeat the same argument for $\frac{\partial}{\partial 
x}\int_0^tf(s,x)ds$. In order to justify the differentiation under the  
integral sign we notice that
$$|\frac{\partial f}{\partial x}(s,x)|\leq \frac{C_T}{ 
\sqrt{(t-s)s}}.$$
Gathering the preceding calculations we have obtained
$$\frac{\partial c}{\partial x}(t,x)=  e^{-\lambda t}\E 
c_0'(x+W_t) + \int_0^t e^{-\lambda (t-s)} \int 
\rho_{s}(y)\frac{y-x}{\sqrt{2\pi}(t-s)^{3/2}}e^{\frac{-(y-x)^2}{2(t-s)}}
 dy~ds.$$
Using the assumption on $c_0$ and Inequality~\eqref{density_infty}, for 
any $t\in (0,T]$ one has
$$\|\frac{\partial c}{\partial x}(t,\cdot)\|_{L^\infty(\R)} \leq 
\|c_0'\|_{L^\infty(\R)}  + C_T.$$
In addition, the preceding calculation and Lebesgue's Dominated 
Convergence Theorem show that
$\frac{\partial c}{\partial x}(t,\cdot)$ is continuous on $\R$. We thus have 
obtained the desired property.

The above discussion shows that we are in a position to apply 
Proposition 
~\ref{MeqUniq} 
with $b(t,x) \equiv  \chi e^{-\lambda t}\E 
c_0'(x+W_t) $ and
$B(t,x;\rho)$ defined as in~(\ref{def:B}) with $K\equiv K^\sharp$:
the function $\rho(t,x)$ satisfies~(\ref{eq:rho-KS})
in the sense of the distributions.
Therefore, it is a solution to the Keller Segel system 
\eqref{KS} in the sense of Definition~\ref{notionOfSol}.
We now check the uniqueness of this solution.

Assume there exists another solution $\rho^1$ satisfying Definition 
~\ref{notionOfSol} with the same initial condition as $\rho$. 
For notation convenience, in the calculation below we set
$c_t(x):=c(t,x)$, $c^1_t(x):=c^1(t,x)$, $\rho_t(x):=\rho(t,x)$, and 
$\rho^1_t(x):=\rho^1(t,x)$.


Using Definition~\ref{notionOfSol},
\begin{align*}
&\|\rho^1_t-\rho_t\|_{L^1(\R)}\leq \int_0^t \|\frac{\partial 
g_{t-s}}{\partial x} \ast( \frac{\partial c^1_s}{\partial 
x}\rho_s^1-\frac{\partial c_s}{\partial x}\rho_s) 
\|_{L^1(\R)}ds\\
&\leq \int_0^t \|\frac{\partial g_{t-s}}{\partial x}\ast( 
\frac{\partial c^1}{\partial x}(\rho_s^1-\rho_s))\|_{L^1(\R)}ds 
+\int_0^t \|\frac{\partial g_{t-s}}{\partial x} 
\ast(\rho_s(\frac{\partial c^1}{\partial x} 
-\frac{\partial c_s}{\partial x})) \|_{L^1(\R)}ds\\
& =:I+II.
\end{align*}
Using standard convolution inequalities and $\|\frac{\partial 
g_{t-s}}{\partial x}\|_{L^1(\R)}\leq \frac{C}{\sqrt{t-s}}$ we deduce:
$$I\leq C \int_0^t \frac{\|\rho_s^1-\rho_s\|_{L^1(\R)}}{\sqrt{t-s}} 
ds\quad\text{      and      } \quad II\leq C \int_0^t 
\frac{\|\frac{\partial c^1_s}{\partial x} -\frac{\partial c_s}{\partial 
x} \|_{L^1(\R)}}{\sqrt{t-s}\sqrt{s}} ds.$$
Therefore
\begin{equation} \label{ineq:c-rho}
\|\frac{\partial c^1_s}{\partial x} 
 -\frac{\partial c_s}{\partial x} \|_{L^1(\R)} \leq \int_0^s \| 
(\rho_u^1-\rho_u)\ast\frac{\partial g_{s-u}}{\partial 
x}\|_{L^1(\R)}du\leq 
C\int_0^s \frac{\|\rho_u^1-\rho_u\|_{L^1(\R)}}{\sqrt{s-u}} du,
\end{equation}
from which
\begin{equation*}
\begin{split}
II &\leq C 
\int_0^t\frac{1}{\sqrt{s}\sqrt{t-s}}
\int_0^s\frac{\|\rho_u^1-\rho_u\|_{L^1(\R)}}{\sqrt{s-u}}~du~ds \\
 &\leq C \int_0^t\|\rho_u^1-\rho_u\|_{L^1(\R)}\int_u^t 
 \frac{1}{\sqrt{s}\sqrt{s-u}\sqrt{t-s}}~ds~du
 \leq C_T \int_0^t \frac{\|\rho_u^1-\rho_u\|_{L^1(\R)}}{\sqrt{u}}du.
\end{split}
\end{equation*}
Gathering the preceding bounds for~$I$ and $II$ we get
$$\|p^1_t-p_t\|_{L^1(\R)}\leq C_T \int_0^t \frac{\|p_s^1-p_s\|_{L^1(\R)}}{\sqrt{t-s}}ds 
+ C_T \int_0^t \frac{\|p_s^1-p_s\|_{L^1(\R)}}{\sqrt{s}}ds.$$
Lemma \ref{ModifiedSgGronwall} implies that
$\|\rho^1_t-\rho_t\|_{L^1(\R)}=0$ for every $t\leq T$. In view 
of~\eqref{ineq:c-rho}
we also have $\|\frac{\partial c^1_t}{\partial x} 
 -\frac{\partial c_t}{\partial x} \|_{L^1(\R)}=0$.
This completes the proof of 
Corollary~\ref{KSmain}.

\section{Appendix} \label{sec:appendix}
We here propose a light simplification of the calculations in~\cite{QianZheng}.

\begin{proposition} \label{QZTH2}
Let $y\in \R$ and let $\beta$ be a constant.  Denote by 
$p^\beta_y(t,x,z)$ the transition probability density (with respect to 
the Lebesgue measure) of the unique weak solution to 
$$ X_t= x+\beta\int_0^t \text{sgn}(y-X_s)~ds + W_t. $$
Then 
\begin{equation} \label{eq:p-beta-x-y-z}
\begin{split}
p^\beta_y(t,x,z) &= \frac{1}{\sqrt{2\pi} t^{3/2}}\int_0^\infty 
e^{\beta(|y-x| + \bar{y} - |z-y|)- 
\frac{\beta^2}{2}t}(\bar{y}+|z-y|+|y-x|)
e^{-\frac{(\bar{y}+|z-y|+|y-x|)^2}{2t}} d\bar{y} \\
&\qquad +\frac{1}{\sqrt{2\pi t}} e^{\beta(|y-x| - |z-y|)- 
\frac{\beta^2}{2}t}(e^{-\frac{(z-x)^2}{2t}}
- e^{-\frac{(|z-y|+|y-x|)^2}{2t}}).
\end{split}
\end{equation}
In particular,
\begin{equation} \label{eq:p-beta}
p^\beta_y(t,x,y) =  \frac{1}{\sqrt{2\pi t}} 
\int_{\frac{|x-y|}{\sqrt{t}}}^{\infty}  
z e^{-\frac{(z-\beta \sqrt{t})^2}{2}}~dz.
\end{equation}
\end{proposition}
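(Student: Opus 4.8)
The plan is to remove the drift by a Girsanov change of measure, re-express the resulting exponential weight through the It\^o--Tanaka formula in terms of a Brownian local time, and then integrate against the classical joint law of the position and local time of a Brownian motion.

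Since the drift $x\mapsto \beta\,\text{sgn}(y-x)$ is bounded, Novikov's condition holds trivially; this already yields weak existence and uniqueness for the SDE, and, writing $\Q$ for the probability under which $(X_s)_{0\le s\le t}$ is a Brownian motion started at $x$, it gives for any bounded continuous $f$
\[
\E_\P[f(X_t)] = \E_\Q\!\Big[f(X_t)\,\exp\!\Big(\beta\!\int_0^t \text{sgn}(y-X_s)\,dX_s - \tfrac{\beta^2}{2}t\Big)\Big],
\]
where we used $\text{sgn}(y-X_s)^2=1$ for a.e.\ $s$. Applying the symmetric It\^o--Tanaka formula to $|X_s-y|$ under $\Q$, with $L_t^y$ the local time of $X$ at level $y$, one has $|X_t-y| = |x-y| - \int_0^t \text{sgn}(y-X_s)\,dX_s + L_t^y$, so the weight above equals $\exp\!\big(\beta(|x-y|-|X_t-y|+L_t^y) - \tfrac{\beta^2}{2}t\big)$.

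It then suffices to integrate this weight against the joint law, under $\Q$, of $(X_t,L_t^y)$ for a Brownian motion started at $x$. This law is classical (reflection principle together with L\'evy's identity for $(|B|,L^0)$): on $\{\ell>0\}$ it has the density
\[
\P_x\big(X_t\in dz,\ L_t^y\in d\ell\big) = \frac{|x-y|+\ell+|z-y|}{\sqrt{2\pi}\,t^{3/2}}\,e^{-\frac{(|x-y|+\ell+|z-y|)^2}{2t}}\,d\ell\,dz,
\]
plus an atom at $\ell=0$ carried by the transition density of Brownian motion killed at $y$, namely $\tfrac{1}{\sqrt{2\pi t}}\big(e^{-(z-x)^2/2t}-e^{-(|z-y|+|x-y|)^2/2t}\big)\,dz$, which is nonnegative and vanishes when $x$ and $z$ lie on opposite sides of $y$. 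Substituting these two pieces, factoring $e^{\beta(|x-y|-|z-y|)-\beta^2 t/2}$ out of the integral over $\ell$, and renaming $\ell=\bar y$ yields exactly \eqref{eq:p-beta-x-y-z}.

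Finally, for \eqref{eq:p-beta} set $z=y$: the atomic contribution vanishes because $e^{-(y-x)^2/2t}-e^{-|x-y|^2/2t}=0$, and in the remaining integral the substitution $u=(|x-y|+\bar y)/\sqrt{t}$ together with the completion of the square $-\tfrac{u^2}{2}+\beta\sqrt{t}\,u-\tfrac{\beta^2 t}{2}=-\tfrac{(u-\beta\sqrt{t})^2}{2}$ gives $\tfrac{1}{\sqrt{2\pi t}}\int_{|x-y|/\sqrt{t}}^{\infty} u\,e^{-(u-\beta\sqrt{t})^2/2}\,du$, as claimed. The only non-elementary ingredient is the joint density of $(X_t,L_t^y)$; everything else is Girsanov plus bookkeeping, which is precisely the ``light simplification'' over \cite{QianZheng}, where that joint law is instead recovered through pinned-diffusion computations. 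I expect the delicate points to be keeping the normalizations of the local time in Tanaka's formula and in the quoted joint density consistent, and handling the $\ell=0$ atom correctly.
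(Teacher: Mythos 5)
Your argument is correct and is essentially the paper's own proof: a Girsanov change of measure to remove the bounded drift, Tanaka's formula to rewrite the stochastic integral in the exponential weight as $|x-y|-|X_t-y|+L_t^y$, and integration against the classical joint law of a Brownian motion and its local time (the paper cites Borodin--Salminen for exactly the two pieces you quote, including the $\ell=0$ part given by the killed transition density), followed by the same substitution and completion of the square for the case $z=y$. The only cosmetic difference is that the paper applies Tanaka to $W$ at level $y-x$ and shifts variables at the end, whereas you work directly with $X=x+W$ at level $y$; the normalizations you flag as delicate are indeed consistent in both versions.
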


\begin{proof}
Let $f$ be a bounded continuous function.
The Girsanov transform leads to
$$ \E (f(X_t))= \E (f(x+W_t)e^{\beta \int_0^t \text{sgn} (y-x-W_s)dWs- 
\frac{\beta^2}{2}t}). $$
Let $L_t^a$ be the Brownian local time. By Tanaka's formula 
(\cite{KaratzasShreve}, p. 205):
$$|W_t-a|= |a|+\int_0^t \text{sgn}(W_s-a)dW_s + L_t^a.$$
Therefore
$$ \int_0^t \text{sgn}(y-x-W_s)dW_s=  |y-x| + L_t^a - |W_t-(y-x)|, $$
from which
$$\E (f(X_t)) = \E (f(x+W_t)e^{\beta( |y-x| + L_t^{y-x} - |W_t-(y-x)|)
- \frac{\beta^2}{2}t}). $$
Recall that $(W_t,L_t^a)$ has the following joint distribution (see 
\cite[p.200,Eq.(1.3.8)]{BorodinSalminen}:
\begin{equation*}
\begin{cases}
& \bar{y}>0 : \quad \P(W_t \in dz, L_t^a\in 
d\bar{y})=\frac{1}{\sqrt{2\pi} t^{3/2}} 
(\bar{y}+|z-a|+|a|)e^{-\frac{(\bar{y}+|z-a|+|a|)^2}{2t}} d\bar{y}dz.\\
& \P (W_t \in dz, L_t^a=0)= \frac{1}{\sqrt{2\pi t}} 
e^{-\frac{z^2}{2t}}dz -\frac{1}{\sqrt{2\pi t}}
e^{-\frac{(|z-a|+|a|)^2}{2t}}dz .
\end{cases}
\end{equation*}
It comes:
\begin{equation*}
\begin{split}
\E (f(X_t)) &= \frac{1}{\sqrt{2\pi} t^{3/2}} \int_\R \int_0^\infty 
f(x+z) e^{\beta(|y-x| +  \bar{y} - |z-(y-x)|)
- \frac{\beta^2}{2}t}(\bar{y}+|z-(y-x)|+|y-x|) \\
&\qquad ~~~~~~~~~~~~~~~~~~~~~
e^{-\frac{(\bar{y}+|z-(y-x)|+|y-x|)^2}{2t}}~d\bar{y}~dz \\
&\qquad + \frac{1}{\sqrt{2\pi t}} \int_\R f(x+z) e^{\beta(|y-x|  
- |z-(y-x)|)- \frac{\beta^2}{2}t}(e^{-\frac{z^2}{2t}}
-e^{-\frac{(|z-(y-x)|+|y-x|)^2}{2t}})~dz.
\end{split}
\end{equation*}
The change of variables $x+z=z'$ leads to
\begin{equation*}
\begin{split}
\E (f(X_t)) &= \frac{1}{\sqrt{2\pi} t^{3/2}} \int_\R f(z') 
\int_0^\infty e^{\beta(|y-x|  + \bar{y} - |z'-y|)- 
\frac{\beta^2}{2}t}(\bar{y}+|z'-y)|+|y-x|)
e^{-\frac{(\bar{y}+|z'-y|+|y-x|)^2}{2t}}~d\bar{y}dz' \\
&\qquad + \frac{1}{\sqrt{2\pi t}} \int_\R f(z') e^{\beta(|y-x| - |z'-y|)
- \frac{\beta^2}{2}t}(e^{-\frac{(z'-x)^2}{2t}}
- e^{-\frac{(|z'-y|+|y-x|)^2}{2t}})dz',
\end{split}
\end{equation*}
from which the desired result follows.

\end{proof}

In the next Corollary we use the same notation as in the proof of Theorem \ref{QianZhengUSTh}.
\begin{cor}
\label{cor:ineq-derivee-densite}
Let $0<s<t\leq T$.Then for any $z,y \in \R$, there exists $C_{T,\beta,x,y}$ such that
$$\E |\left(\frac{\partial}{\partial x}p^\beta_y\right)(t-s,X_s^{(b)},z)|\leq C_{T,\beta,x,y} h(s,z),$$
where $h$ belongs to $L^1([0,t]\times \R)$.
\end{cor}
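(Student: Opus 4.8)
The plan is first to establish a pointwise bound on $\partial_x p^\beta_y$ that keeps a genuine Gaussian decay in $z$ of width $\sqrt{t-s}$ (sharpening the crude bound $C/(t-s)$ already invoked just after Proposition~\ref{QZTH2}), and then to pass from $X_s^{(b)}$ to the Brownian path $x_0+W_s$ (where $x_0=X_0^{(b)}$) by a Girsanov change of measure and Cauchy--Schwarz, after which everything reduces to one elementary Gaussian convolution and to the integrability of $s\mapsto (t-s)^{-3/4}s^{-1/4}$.

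\textbf{Step 1: a pointwise bound on the derivative.} In formula \eqref{eq:p-beta-x-y-z} the variable $x$ appears only through the Lipschitz map $x\mapsto|y-x|$ and through the smooth Gaussian $e^{-(z-x)^2/(2(t-s))}$, so $\partial_x p^\beta_y(t-s,x,z)$ exists for every $x\neq y$ and is obtained by differentiating termwise (under the $\bar y$-integral in the first summand). Writing $\tau:=t-s\in(0,T]$, each resulting term is, up to the bounded factor $\mathrm{sgn}(y-x)$, of the form $\tau^{-j/2}$ ($j\in\{1,3\}$) times $e^{\pm\beta(\cdot)-\beta^2\tau/2}$ times a polynomial of degree $\le 2$ (with a $\tau^{-1}$ in front of the quadratic part) in a nonnegative ``distance'' $B$ times $e^{-B^2/(2\tau)}$, where $B$ is one of $\bar y+|z-y|+|y-x|$, $|z-y|+|y-x|$, $|z-x|$, and in all three cases $B\ge|z-x|$. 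I would bound each term using: (i) $\sup_{v\ge0}v^k e^{-v^2/2}<\infty$ to absorb the polynomial into a flatter Gaussian; (ii) the inequality $\pm\beta B-\tfrac{B^2}{2\tau}\le-\tfrac{B^2}{4\tau}+\beta^2T$ for $B\ge0$, $0<\tau\le T$, which — together with the triangle bounds $|y-x|\le|z-x|+|z-y|$ and $|z-y|\le|z-x|+|y-x|$ used to move any surviving $e^{\pm\beta|z-y|}$ or $e^{\pm\beta|y-x|}$ against the Gaussian — absorbs the exponential–linear weights at the price of a factor depending only on $\beta$ and $T$; and (iii) $\int_0^\infty e^{-(\bar y+a)^2/(2\tau)}d\bar y\le\sqrt{\pi\tau}\,e^{-a^2/(4\tau)}$ for $a\ge0$, which turns the $\tau^{-3/2}$ of the first summand into $\tau^{-1}$. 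Since $B\ge|z-x|$, the outcome is that, for an absolute $c>0$ and a constant $C_{\beta,T}$ depending only on $\beta,T$,
$$\Bigl|\frac{\partial}{\partial x}p^\beta_y(t-s,x,z)\Bigr|\le\frac{C_{\beta,T}}{t-s}\,e^{-\frac{(z-x)^2}{c\,(t-s)}},\qquad x\neq y,\ 0<s<t\le T,\ z\in\R.$$

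\textbf{Step 2: Girsanov, Cauchy--Schwarz, and the Gaussian convolution.} Since the drift $b$ of $X^{(b)}$ is bounded by $\beta$, the stochastic exponential $M_s:=\exp\bigl(\int_0^sb(u,x_0+W_u)\,dW_u-\tfrac12\int_0^s|b(u,x_0+W_u)|^2du\bigr)$ is a true martingale with $\E M_s^2\le e^{\beta^2T}$ (write $M_s^2$ as the exponential martingale of drift $2b$ times $e^{\int_0^sb^2}$), and Girsanov's theorem gives, for the bounded measurable map $w\mapsto|\partial_x p^\beta_y(\tau,w,z)|$,
$$\E\Bigl|\frac{\partial}{\partial x}p^\beta_y(\tau,X_s^{(b)},z)\Bigr|=\E\Bigl[M_s\,\Bigl|\frac{\partial}{\partial x}p^\beta_y(\tau,x_0+W_s,z)\Bigr|\Bigr]\le e^{\beta^2T/2}\Bigl(\int g(s,w-x_0)\,\Bigl|\frac{\partial}{\partial x}p^\beta_y(\tau,w,z)\Bigr|^2dw\Bigr)^{1/2}.$$
Plugging in Step~1 and using the elementary identity $\int g(s,w-x_0)e^{-2(z-w)^2/(c\tau)}dw=\tfrac12\sqrt{\tfrac{c\tau}{s+c\tau/4}}\,e^{-(z-x_0)^2/(2(s+c\tau/4))}\le C\sqrt{\tau/s}\;e^{-(z-x_0)^2/(cT)}$ (valid for $\tau\le T$, since $s+c\tau/4\ge s$ and $\le CT$), one obtains
$$\E\Bigl|\frac{\partial}{\partial x}p^\beta_y(t-s,X_s^{(b)},z)\Bigr|\le\frac{C_{\beta,T}}{(t-s)^{3/4}\,s^{1/4}}\,e^{-\frac{(z-x_0)^2}{cT}}\le C_{T,\beta,x_0}\,h(s,z),\qquad h(s,z):=\frac{e^{-z^2/(cT)}}{(t-s)^{3/4}\,s^{1/4}},$$
where $(z-x_0)^2\ge\tfrac12 z^2-x_0^2$ has been used to make $h$ independent of $x_0$ (and of $y$), at the cost of putting $e^{x_0^2/(cT)}$ into the constant. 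Finally $\int_0^t\!\!\int_\R h(s,z)\,dz\,ds=\sqrt{\pi cT}\int_0^t(t-s)^{-3/4}s^{-1/4}ds=\sqrt{\pi cT}\int_0^1(1-r)^{-3/4}r^{-1/4}dr<\infty$, so $h\in L^1([0,t]\times\R)$, which is the assertion.

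\textbf{Main obstacle.} The delicate point lies entirely in Step~1: only a $(t-s)^{-1}$ singularity is available for $\partial_x p^\beta_y$, and this is not integrable in $s$ by itself; integrability of $h$ survives only because the Gaussian in that bound has width $\sqrt{t-s}$, so that squaring (for Cauchy--Schwarz) and convolving with $g(s,\cdot)$ produces the gain $(\tfrac{t-s}{s})^{1/4}$ and the genuinely integrable singularity $(t-s)^{-3/4}s^{-1/4}$. This is why one must carry the Gaussian factors through the differentiation of \eqref{eq:p-beta-x-y-z} and absorb every exponential–linear weight $e^{\pm\beta(\cdot)}$ \emph{without} ruining the decay in $z$; and the Girsanov reduction of Step~2 is exactly what allows this corollary to avoid invoking Theorem~\ref{QianZhengUSTh}, whose own proof already uses it.
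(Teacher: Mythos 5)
Your proposal is correct, and its skeleton is the one the paper uses: differentiate the explicit Qian--Zheng formula for $p^\beta_y$, then pass from $X^{(b)}_s$ to the Wiener measure by Girsanov plus Cauchy--Schwarz, and finish with elementary Gaussian integrals. The genuine difference is in how the exponential--linear weights $e^{\beta|y-\cdot|}$ are handled. You absorb them \emph{pointwise}, via the triangle inequalities $|y-\bar x|\le |z-y|+|z-\bar x|$ and $\beta B-\tfrac{B^2}{2\tau}\le \beta^2\tau-\tfrac{B^2}{4\tau}$, into a flatter Gaussian of width $\sqrt{t-s}$ centred at $z$; this yields the clean bound $|\partial_x p^\beta_y(\tau,x,z)|\le C_{\beta,T}\,\tau^{-1}e^{-(z-x)^2/(c\tau)}$, after which a single Gaussian convolution gives the integrable singularity $(t-s)^{-3/4}s^{-1/4}$ and a majorant independent of $y$. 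The paper instead keeps the weight $e^{\beta|y-W^x_s|}$ through the expectation and disposes of it by a \emph{second} Cauchy--Schwarz (the $A_1A_2$ split), computing the Gaussian exponential moment $A_2=\int e^{4\beta|y-u|}g_s(u-x)\,du\le C_{\beta,x,y}e^{8\beta^2 s}$ explicitly; this is slightly rougher (it produces the exponent $(t-s)^{-7/8}$ and constants depending on both $x$ and $y$) but requires no careful bookkeeping of the Gaussian factors at the pointwise stage. Both routes give an $h\in L^1([0,t]\times\R)$, and your version has the minor advantages of a $y$-uniform constant and of avoiding the extra Cauchy--Schwarz; the only point to state explicitly in a final write-up is that the pointwise differentiation/absorption in your Step 1 is performed for $x\neq y$ (the set $\{X^{(b)}_s=y\}$ being null, this suffices), which the paper glosses over as well.
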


\begin{proof}

By Girsanov's theorem,
for some constant $C_{T,\beta}$ we have
\begin{equation*} 
\E \left|\left(\frac{\partial}{\partial x}p^\beta_y\right)(t-s,X_s^{(b)},z) \right|
\leq C_{T,\beta} \sqrt{\E \left|\left(\frac{\partial}
{\partial x}p^\beta_y\right)(t-s,W_s^x,z) \right|^2}.
\end{equation*}
Observe that
\begin{equation*}
\begin{split}
\frac{\partial}{\partial 
\bar{x}}p^\beta_y(t-s,\bar{x},z) &= \frac{\beta}{\sqrt{2\pi(t-s)} }e^{-2\beta 
|z-y|}e^{-\frac{(|z-y|+|y-\bar{x}|-\beta 
(t-s))^2}{2(t-s)}} \text{sgn}(\bar{x}-y) \\
&\qquad  + \frac{\beta}{\sqrt{2\pi (t-s)}} e^{-\beta  |z-y|- 
\frac{\beta^2}{2}(t-s)}e^{\beta|y-\bar{x}|
-\frac{(z-\bar{x})^2}{2(t-s)}}
\text{sgn}(\bar{x}-y) \\
&\qquad  + \frac{z-\bar{x}}{2\pi (t-s)^{3/2}} e^{-\beta  |z-y|- 
\frac{\beta^2}{2}(t-s)}e^{\beta|y-\bar{x}|
-\frac{(z-\bar{x})^2}{2(t-s)}}.
\end{split}
\end{equation*}
The sum of the absolute values of the first two terms in the right-hand 
side is bounded from above by
$$ \frac{\beta}{\sqrt{2\pi(t-s)}}e^{-2\beta |z-y| 
+ \beta |y-\bar{x}|}. $$
Thus, 
$$\E \left|\left(\frac{\partial}{\partial x}p^\beta_y\right)(t-s,X_s^{(b)},z) \right|
\leq \frac{C_{T,\beta}}{\sqrt{2\pi(t-s)}}\sqrt{\E e^{2\beta |y-W_s^x|}}+ \frac{C_{T,\beta}}{(t-s)^{3/2}}\sqrt{\E(|z-W_s^x|^2e^{2\beta |y-W_s^x|-\frac{(z-W_s^x)^2}{t-s}})}=: B+A.$$
Notice that
$$A\leq \frac{C_{T,\beta}}{(t-s)^{3/2}}(\E[|z-W_s^x|^4e^{-2\frac{(z-W_s^x)^2}{t-s}}]\E[e^{4\beta|y-W_s^x|}])^{1/4}=:\frac{C_{T,\beta}}{(t-s)^{3/2}}(A_1A_2)^{1/4}.$$ 
Firstly, as there exists an $\alpha>0$ such that $|a|^4e^{-a^2}\leq Ce^{-\alpha a^2}$, one has
$$A_1\leq C (t-s)^2 \int e^{-\alpha \frac{(z-u)^2}{t-s}}g_s(u-x)du \leq \frac{(t-s)^{2+1/2}}{\sqrt{s+(t-s)/(2\alpha)}} e^{-\frac{(z-x)^2}{2(s+(t-s)/(2\alpha))}}.$$ 
Secondly,
\begin{align*}
A_2&=\int e^{4\beta|y-u|}g_s(u-x)du= e^{-4\beta y}\int_y^\infty e^{4\beta u}\frac{1}{\sqrt{s}}e^{-\frac{(u-x)^2}{2s}}du+e^{4\beta y}\int_{-\infty}^y e^{-4\beta u}\frac{1}{\sqrt{s}}e^{-\frac{(u-x)^2}{2s}}du\\
&= e^{4\beta(x- y)}e^{8\beta^2 s }\int_y^\infty \frac{1}{\sqrt{s}}e^{-\frac{(u-x-4\beta s)^2}{2s}}du+e^{4\beta(y-x)}e^{8\beta^2 s }\int_{-\infty}^y \frac{1}{\sqrt{s}}e^{-\frac{(u-x+4\beta s)^2}{2s}}du \leq e^{8\beta^2 s } C_{\beta,x,y}.\\
\end{align*}
Therefore, 
$$A\leq C_{T,\beta,x,y} \frac{1}{(t-s)^{7/8}}g_{s+(t-s)/(2\alpha)}(z-x).$$
The term $B$ is treated in the similar way as $A_2$. 
\end{proof}


\pagebreak
\bibliography{biblio}
\end{document}